\documentclass[11pt,leqno]{amsart}
\usepackage{amscd}
\usepackage{color}
\usepackage[symbol]{footmisc}
\usepackage{amssymb}
\usepackage{amsfonts}
\usepackage{tikz-cd}
\usepackage{latexsym}
\usepackage{verbatim}
 \usepackage[backref=page]{hyperref}
 \renewcommand*{\backref}[1]{}
\renewcommand*{\backrefalt}[4]{%
 	\ifcase #1 (Not cited).%
	\or        (Cited on page~#2).%
	\else      (Cited on pages~#2).%
	\fi}
\hypersetup{colorlinks=true,linkcolor=black,citecolor=black,urlcolor=black}

\newcommand{\Z}{\mathbb{Z}}

\newcommand{\R}{\mathbb{R}}
\newcommand{\C}{\mathbb{C}}

\renewcommand{\epsilon}{\varepsilon}
\renewcommand{\phi}{\varphi}
\renewcommand{\Re}{\mathrm{Re}}

\theoremstyle{plain}
\newtheorem{teor}{Theorem}[section]
\newtheorem{conjecture}[teor]{Conjecture}
\newtheorem{prop}[teor]{Proposition}
\newtheorem{lem}[teor]{Lemma}

\newtheorem{rmk}[teor]{Remark}
\theoremstyle{definition}

\newtheorem{es}[teor]{Example}
\title[HKT geometry on LCHK manifolds]{HKT geometry on locally conformally hyperk\"ahler manifolds}
\begin{document}
\subjclass[2020]{53C26, 35J96, 53E30, 53C18}
\keywords{Quaternionic Monge-Ampère equation, locally conformally hyperk\"ahler manifolds, a priori estimates, hypercomplex geometry}

\thanks{This work was supported by GNSAGA of INdAM. The first-named author was supported by the DFF Sapere Aude grant \lq \lq Conformal geometry: metrics and cohomology''}

\address{Giovanni Gentili \\ Aarhus University\\ Institut for Matematik\\
8000, Aarhus C\\ Denmark.}
\email{giovanni.gentili@math.au.dk}

\medskip

\address{Luigi Vezzoni \\
Dipartimento di Matematica ``G. Peano'', Universit\`{a} degli studi di Torino \\
Via Carlo Alberto 10, Torino, Italy}

\email{luigi.vezzoni@unito.it}

\author{Giovanni Gentili and Luigi Vezzoni}


\begin{abstract}
We study the HKT geometry of compact locally conformally hyperk\"ahler manifolds. We prove that on these manifolds the quaternionic Monge--Amp\`ere equation is always solvable, extending results of Alesker and of Dinew--Sroka, and we confirm the conjecture on the maximal existence time of the HKT--Ricci flow. On quaternionic Hopf surfaces we construct non-trivial HKT solitons and prove convergence of the normalized flow for symmetric initial data.
\end{abstract}

\maketitle

\section{Introduction}
In \cite{AV10} Alesker and Verbitsky introduced a Monge--Amp\`ere-type equation on HKT manifolds as a natural adaptation of the classical complex Monge--Amp\`ere equation to the hypercomplex case. Given a compact HKT manifold $(M^{4n},I,J,K,g)$ with HKT form $\Omega$, the {\em quaternionic Monge--Amp\`ere equation} consists of 
\begin{equation}\label{CY}
(\Omega+\partial\partial_J\phi)^n={\rm e}^{F+b}\Omega^n\,,\quad \Omega+\partial\partial_J\phi>0\,,\quad 
{\rm sup}_M \varphi=0\,,
\end{equation}
where $F$ is a given smooth map on $M$. In \cite[Conjecture 1.5]{AV10} it was conjectured that the equation can always be solved at least when all three complex structures $(I,J,K)$ have holomorphically trivial canonical bundle. The solvability of equation \eqref{CY} in its full generality is still open, but several partial results have been proved in the literature (see e.g. \cite{Ale13,AS13,AS17,AV10,BGV22,BGV23,DS23,GV22,GV23,Sro20,Sro24}, and the references therein). The most significant advances so far were obtained by Alesker in \cite{Ale13} and by Dinew and Sroka in \cite{DS23}. In \cite{Ale13} the solvability of the equation is shown on flat hyperk\"ahler manifolds, while in \cite{DS23} the flatness assumption was removed. 

\medskip 
In our first main result we extend the result of Dinew and Sroka to locally conformally hyperk\"ahler manifolds (LCHK manifolds):

\begin{teor}\label{main1}
Let $(M,I,J,K,g)$ be a compact HKT manifold. If $(I,J,K)$ admits a compatible LCHK metric $\tilde g$, then equation \eqref{CY} can be solved for every $F\in C^{\infty}(M)$.  
\end{teor}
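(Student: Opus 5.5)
The plan is to run the continuity method, following the strategy of Dinew--Sroka \cite{DS23}. Every estimate they derive from the hyperk\"ahler structure will be replaced by one that uses only the LCHK structure.

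\textbf{Setup.} Consider the family
\[
(\Omega+\partial\partial_J\phi_t)^n={\rm e}^{tF+b_t}\Omega^n,\qquad t\in[0,1],
\]
which is solved at $t=0$ by $\phi_0=0$, $b_0=0$.

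\textbf{Openness.} Openness is standard. The linearized operator
\[
\psi\mapsto \frac{n\,\partial\partial_J\psi\wedge\Omega_\phi^{n-1}}{\Omega_\phi^n}
\]
is elliptic, has no zeroth-order term, and its kernel consists of constants by the maximum principle. Its cokernel is one-dimensional, and the constant $b_t$ is exactly the extra degree of freedom that absorbs it. The implicit function theorem then gives openness, as in \cite{AV10}.

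\textbf{Closedness and the $C^0$ bound.} Everything reduces to uniform a priori estimates:
\begin{enumerate}
\item a bound on $b$;
\item a $C^0$ bound on $\phi$;
\item a Laplacian (or $C^2$-type) bound;
\item $C^{2,\alpha}$ estimates via Evans--Krylov.
\end{enumerate}
Items 1, 2 and 4 are already known in the general HKT setting (see \cite{Sro20,AS17,DS23}, e.g.\ via an ABP or Cherrier-type argument). The key input is the second-order estimate, for which I will exploit the LCHK structure as follows. Since $\tilde g$ is compatible with $(I,J,K)$, it is locally conformal to a hyperk\"ahler metric. The Lee form $\theta$ is closed and $\partial$-compatible, so on the universal cover $\tilde M$ there is a hyperk\"ahler metric $g_0={\rm e}^{-f}\tilde g$. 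Its HKT form $\Omega_0$ satisfies $\partial\Omega_0=0$, and its $(2n,0)$-form $\Omega_0^n$ is holomorphic and nonvanishing, hence $\partial_J$-closed as well. This supplies the missing ingredient of \cite{DS23}: a local $q$-real $(2n,0)$-form trivializing the canonical bundle, parallel up to a conformal factor ${\rm e}^{nf}$ with $df=\theta$. The factor is controlled because $\theta$ is a globally defined smooth form on $M$. The change of $\Omega_0^n$ under deck transformations is a constant multiplicative factor, so it disappears in the logarithmic quantities used in the estimates.

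\textbf{The $C^2$ estimate (main step).} I will compute in local hyperk\"ahler coordinates for $g_0$ and apply the maximum principle to
\[
Q=\log\operatorname{tr}_{\Omega}\Omega_\phi-A\phi .
\]
Equivalently, one can use ${\rm tr}_{\Omega_0}\Omega_\phi$ rewritten through the conformal factor. The linearized operator applied to $\log{\rm tr}$ produces third-order terms. Following \cite{DS23}, these are handled with the quaternionic Aubin--Yau type inequality. The torsion of $\Omega$ relative to the hyperk\"ahler background gives error terms of the form $C\,{\rm tr}_{\Omega_\phi}\Omega$. The derivatives of $f$ enter only through $\theta$ and $\nabla\theta$, and these are globally bounded, so all such terms are absorbed by choosing $A$ large. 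The main obstacle is this bookkeeping: the first-order terms coming from $\theta$ and from the torsion must be absorbable without a gradient bound. If they are not, I would first prove a gradient estimate by a Błocki-type argument, or use a Hou--Ma--Wu style quantity in the quaternionic setting.

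\textbf{Conclusion.} With $C^2$ in hand, Evans--Krylov and bootstrapping give closedness, hence a solution of \eqref{CY} at $t=1$.
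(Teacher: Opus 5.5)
\textbf{There is a genuine gap in the second-order estimate.} Your overall architecture matches the paper: continuity method, standard openness, and known $C^0$ and higher-order theory, so that everything reduces to an upper bound for a trace of $g_\phi$. The gap is in the only new step, the Laplacian estimate. You leave it as ``bookkeeping'', and the mechanism you propose does not work, even if you take the trace with respect to the local hyperk\"ahler metric $\hat g$.

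\textbf{First problem: the linear test function.} Since $g_\phi$ is only HKT, the K\"ahler identity $g_{\phi\,k\bar k,i}=g_{\phi\,i\bar k,k}$ behind Aubin--Yau is replaced by the HKT relation, which carries an extra $J$-twisted term. The resulting quaternionic inequality (Proposition \ref{Alesker}) holds only with a factor $2$:
$|\partial\mathrm{tr}_{\hat g}g_\phi|^2_{g_\phi}\le 2\,\mathrm{tr}_{\hat g}g_\phi\,\hat g^{r\bar s}g_\phi^{i\bar b}g_\phi^{a\bar j}g_{\phi\,a\bar b,\bar s}g_{\phi\,i\bar j,r}$.
This factor is asymptotically sharp. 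On flat $\mathbb H^2$, take an HKT $1$-jet with eigenvalues $(\epsilon^{-1},\epsilon^{-1},\epsilon,\epsilon)$ and $\Omega_{\phi\,12,3}=-2s$, $\Omega_{\phi\,23,1}=\Omega_{\phi\,31,2}=s$. This is compatible with $\partial\Omega_\phi=0$, and with the equation after a negligible correction of $\Omega_{\phi\,34,3}$, and the ratio tends to $2$ as $\epsilon\to0$.

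Hence you only get $\Delta_\phi\log\mathrm{tr}_{\hat g}g_\phi\ge-\frac12|\partial\log\mathrm{tr}_{\hat g}g_\phi|^2_{g_\phi}-\dots$. At a maximum of $\log\mathrm{tr}-A\phi$, the critical equation turns this into $-\frac{A^2}{2}|\partial\phi|^2_{g_\phi}$, which $A\,\mathrm{tr}_{g_\phi}g$ cannot absorb. Enlarging $A$ makes it worse, and even a gradient bound would not help, since this term scales like $A^2$.

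The paper instead uses $Q=\log\mathrm{tr}_{\tilde g}g_\phi-\log(A+\phi)$. The term $|\partial\phi|^2_{g_\phi}/(A+\phi)^2$ coming from $-\log(A+\phi)$ cancels $-|\partial\log\mathrm{tr}_{\tilde g}g_\phi|^2_{g_\phi}$ exactly via $\partial Q(x_0)=0$. Then \eqref{B-team} leaves a \emph{positive} term $\frac12|\partial\log\mathrm{tr}_{\hat g}g_\phi|^2_{g_\phi}$.

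\textbf{Second problem: the conformal-factor terms.} In that scheme the only good zeroth-order term is $\mathrm{tr}_{g_\phi}g/(A+\phi)$, whose coefficient is at most $1$. The conformal-factor terms $-\Delta_\phi f+|\partial f|^2_{g_\phi}-2\Re\langle\partial f,\partial\log\mathrm{tr}_{\hat g}g_\phi\rangle_{g_\phi}$ are of size $\mathrm{tr}_{g_\phi}\hat g$, are not divided by the trace, and involve third derivatives of $\phi$. So boundedness of $\theta$ and $\nabla\theta$ is not enough, and ``choosing $A$ large'' is unavailable: you need an exact sign.

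This is the missing idea. Normalize $\tilde g$ to be Gauduchon, so that its Lee form $df$ is $\nabla^{\tilde g}$-parallel. This gives $\mathrm{Hess}_{\hat g}f=df\otimes df-\frac12|df|^2_{\hat g}\hat g$, hence $\Delta_hf+|\partial f|^2_h\le0$ for \emph{every} hyperhermitian $h$ (Lemma \ref{Gauduchon}). The $f$-terms then combine with the leftover gradient term into $\frac12|\partial\log\mathrm{tr}_{\hat g}g_\phi-2\partial f|^2_{g_\phi}\ge0$. What remains, $0\ge-C_1(1+\mathrm{tr}_{g_\phi}g)/\mathrm{tr}_{\hat g}g_\phi+(\mathrm{tr}_{g_\phi}g-2n)/(A+\phi)$, closes the estimate.

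\textbf{Choice of reference metric.} The trace must be taken with respect to the LCHK metric, not the HKT form $\Omega$ of the equation. These choices are not equivalent: Remark \ref{rem} shows that \eqref{B-team} fails for an HKT reference. What the LCHK structure actually supplies is not a trivialization of the canonical bundle. It supplies a local hyperk\"ahler reference metric, with vanishing first derivatives of $\hat g$ and $J$ and cancellation of the second derivatives of $J$ after tracing, together with the parallel Lee form.
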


We remark that in view of \cite{OPS03} any compact LCHK manifold $(M,I,J,K,\tilde g)$ has a canonical HKT metric $g$ which turns out to satisfy the HKT--Einstein equation introduced in \cite{FG26} 
\begin{equation}\label{Einsteinlambda}
\mathrm{Ric}_I(g)^+=c\,g\,,
\end{equation}
where $c\in \R$, ${\rm Ric}_I(g)$ is the first Chern--Ricci tensor of $(g,I)$ and given a symmetric $2$-tensor $\psi$ on $M$, we denote by 
$\psi^{+}:=\frac12(\psi+J\psi)$ the $J$-invariant component of $\psi$. 

\medskip 
In \cite{BGV22,Zha22} a parabolic approach was proposed to study equation \eqref{CY}. This led the authors to consider the geometric flow 
\begin{equation}\label{flow}
\partial_tg_t=-{\rm Ric}_I(g_t)^{+}
\end{equation}
introduced in \cite{BGV23} on a compact hypercomplex manifold $(M,I,J,K)$. Given a hyperhermitian metric $g$ on $(M,I,J,K)$, the flow is well-posed and, in analogy to the classical K\"ahler--Ricci flow, can be reduced to a parabolic Monge--Amp\`ere-type equation (see \cite{BGV23}). The flow was also studied by Brienza and Streets in \cite{BS26}, where they showed that, when the metric is HKT, it belongs to the family of Hermitian curvature flows introduced by Streets and Tian in \cite{ST11}. They also proved a monotonicity formula for the Chern scalar curvature along the flow and investigated its behaviour on complex surfaces with a hypercomplex structure.

\medskip 
We recall the following 

\begin{conjecture}[\cite{BGV23,BS26}]\label{conj}
On a compact HKT manifold $(M,I,J,K,g_0)$ the maximal-time solution to the flow \eqref{flow} with initial condition $g_0$ is defined for $t\in[0,T)$, where  
\[
T=
\sup\{
t\geq 0\,\,:\,\, [\omega_0]_{\rm HKT}-tc_1^{\rm HKT}(M)>0\}\,.
\]
\end{conjecture}
In contrast to \cite{BS26}, we formulate the conjecture in terms of the fundamental \((1,1)\)-form \(\omega_0\), rather than the HKT \((2,0)\)-form \(\Omega_0\). Here we have adopted the following notation: given a $J$-anti-invariant form $\alpha\in \Lambda^{1,1}_I$, $[\alpha]_{\rm HKT}$ denotes 
$$
[\alpha]_{\rm HKT}=\{\alpha+(i\partial \bar\partial\psi)^{-}\,\,:\,\,\psi \in C^{\infty}(M)\}\,,
$$
and 
$$
c_1^{\rm HKT}(M)=[{\rm Ric}_I(\omega)^{-}]_{\rm HKT}\,,
$$
where ${\rm Ric}_I(\omega)$ is the Chern--Ricci form of some hyperhermitian metric on $(M,I,J,K)$ and given a $2$-form $\beta$ on $M$, $\beta^{-}:=\frac12(\beta-J\beta)$. The class $c_1^{\rm HKT}(M)$ does not depend on the choice of the hyperhermitian metric (see Section \ref{preliminaries}). 

So far Conjecture \ref{conj} was confirmed on hyperk\"ahler manifolds \cite[Theorem 1.1]{BGV23} and on complex surfaces with a hypercomplex structure \cite{BS26}. We confirm the conjecture on LCHK manifolds:
\begin{teor}\label{main2}
Conjecture $\ref{conj}$ holds on compact HKT manifolds admitting a compatible LCHK metric. 
\end{teor}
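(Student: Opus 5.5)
Since the flow \eqref{flow} is well posed and reduces to a parabolic Monge--Amp\`ere-type equation (see \cite{BGV23}), I would write the solution as
\[
\Omega_t=\Omega_0-t\,\rho+\partial\partial_J\phi_t ,
\]
where $\rho$ is the $(2,0)$-form representing $c_1^{\rm HKT}(M)$ built from a fixed reference metric. Then $\phi$ solves
\[
\partial_t\phi=\log\frac{(\Omega_0-t\rho+\partial\partial_J\phi)^n}{\Omega_0^n}+\text{normalising terms},\qquad \phi_0=0 .
\]
Choosing the canonical HKT metric $g$ of \cite{OPS03} attached to the LCHK metric $\tilde g$ makes $\rho$ a constant multiple of the HKT form $\Omega_g$, by the HKT--Einstein property \eqref{Einsteinlambda}. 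The upper bound $T_{\max}\le T$ is immediate: past $T$ the class $[\omega_t]_{\rm HKT}$ contains no positive representative. So the content of Theorem \ref{main2} is the lower bound. Given $T'<T$, I must show that $\phi$ and all its derivatives stay bounded on $[0,T')$; the standard continuation argument then extends the flow.

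The plan has four steps.

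\begin{enumerate}
\item \textbf{Reference forms.} Because $[\omega_0]_{\rm HKT}-T'c_1^{\rm HKT}>0$ and positivity of classes is convex, I can choose reference forms $\hat\Omega_t=\Omega_0-t\rho+\partial\partial_J\psi_t$, affine in $t$, that are uniformly positive on $[0,T']$.

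\item \textbf{Bounds on $\phi$ and $\dot\phi$.} The bounds $|\phi|\le C$ from above and $\dot\phi\le C$ follow from the maximum principle, exactly as in the K\"ahler--Ricci flow argument of Tian--Zhang. The lower bound on $\phi$ is subtler. For a fixed $t$ I would treat the flow equation as an elliptic quaternionic Monge--Amp\`ere equation with the bounded right-hand side $e^{\dot\phi}$, bounded above. The $C^0$ estimate for such equations, in the form proved in Theorem \ref{main1}, would control $\operatorname{osc}\phi$. Combined with a lower bound for $\dot\phi$ obtained from the maximum principle applied to $t\dot\phi-\phi$, this gives a uniform $C^0$ bound.

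\item \textbf{Laplacian estimate.} The key input is the structure of LCHK manifolds. Locally $g=e^{f}g_{HK}$, and the Lee form $\theta$ is parallel for the canonical metric, as in \cite{OPS03}. I would then run the second-order estimate of Dinew--Sroka type \cite{DS23}, adapted to the locally conformally hyperk\"ahler setting. Concretely, I would compute $(\partial_t-\Delta_{\Omega_t})\big(\log\operatorname{tr}_{\hat\Omega}\Omega_t-A\phi\big)$ and bound the torsion terms, which are controlled by the parallel Lee form. This is the same estimate used to prove Theorem \ref{main1}, now with time-dependent reference forms.

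\item \textbf{Higher regularity.} A $C^{2,\alpha}$ estimate via parabolic Evans--Krylov, followed by bootstrapping, completes the argument.
\end{enumerate}

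I expect the main obstacle to be the Laplacian estimate in step 3. It requires controlling the third-order torsion terms that arise because $\Omega$ is only HKT, not hyperk\"ahler. On an LCHK manifold these terms should be algebraic in the parallel Lee form, so the bad terms can be absorbed by choosing $A$ large. If that absorption fails, the fallback is to work with the hyperk\"ahler cover: pull everything back to the universal cover, where $\tilde g$ becomes globally conformal to a hyperk\"ahler metric. I would then use the conformal factor in the test function, as Dinew--Sroka do in the flat/HK case, so that the estimates descend to the compact quotient.
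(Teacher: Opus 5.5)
Your outer skeleton (reference forms affine in $t$, bounds on $\phi$ and $\dot\phi$, a second-order estimate, then parabolic $C^{2,\alpha}$ estimates and bootstrapping) is the same as the paper's. However, Step 3 carries the entire content of the theorem, and as you set it up it does not close.

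First, you trace against your reference HKT forms $\hat\Omega_t$. These contain the arbitrary initial metric $\Omega_0$, so their torsion is not governed by the Lee form of the LCHK metric. Remark~\ref{rem} shows that the inequality \eqref{B-team}, which the argument needs, fails as soon as the metric in the trace is HKT rather than hyperk\"ahler. You also conflate the two metrics. The canonical HKT metric $g$ of \cite{OPS03} is not locally conformally hyperk\"ahler. It is the Gauduchon LCHK metric $\tilde g$ that is locally $e^f\hat g$ with $\hat g$ hyperk\"ahler, and whose Lee form is $\tilde g$-parallel.

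Second, the linear correction $-A\phi$ cannot handle the gradient terms in the quaternionic setting, even for a hyperk\"ahler background. The replacement for Yau's Cauchy--Schwarz inequality is Proposition~\ref{Alesker}. It only bounds $|\partial\mathrm{tr}_{\hat g}g_t|^2_{g_t}/\mathrm{tr}_{\hat g}g_t$ by \emph{twice} the third-order term; in the K\"ahler case the constant is $1$, which is exactly what makes $-A\phi$ work. So at a maximum point, where $\partial\log\mathrm{tr}=A\,\partial\phi$, a term $-\frac{A^2}{2}|\partial\phi|^2_{g_t}$ survives. Nothing in your test function controls it, and taking $A$ large does not help.

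The paper's estimate rests on a precise combination that your proposal does not supply.
\begin{itemize}
\item It uses $Q=\log\mathrm{tr}_{\tilde g}g_t-\log(A+\phi)$, whose critical-point equation cancels the gradient terms exactly. The price is that the only good term is $\mathrm{tr}_{g_t}g_{\chi_t}/(A+\phi)$, with $g_{\chi_t}$ the reference metric. Hence every error must be of the form $C(1+\mathrm{tr}_{g_t}g_{\chi_t})/\mathrm{tr}_{\hat g}g_t$.
\item This form is achieved by writing $\mathrm{tr}_{\tilde g}g_t=e^{-f}\mathrm{tr}_{\hat g}g_t$ and working in hyperk\"ahler normal coordinates for $\hat g$. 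There the curvature of $\hat g$ and the second derivatives of $J$ cancel \cite[Lemmas 3.2, 3.4]{GV26b}.
\item The conformal factor then contributes $-\Delta_{g_t}f+|\partial f|^2_{g_t}-2\Re\langle\partial f,\partial\log\mathrm{tr}_{\hat g}g_t\rangle_{g_t}$. This is measured in the evolving metric and is not of the admissible form.
\item It is neutralised by two exact facts. One is the sign $\Delta_hf+|\partial f|^2_h\le0$ for every hyperhermitian $h$ (Lemma~\ref{Gauduchon}). This follows from $\mathrm{Hess}_{\hat g}f=df\otimes df-\frac12|df|^2_{\hat g}\hat g$, i.e.\ from the parallel Lee form. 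The other is the term $\frac12|\partial\log\mathrm{tr}_{\hat g}g_t|^2_{g_t}$ supplied by Proposition~\ref{Alesker}.
\item Together these combine into the square $\frac12|\partial\log\mathrm{tr}_{\hat g}g_t-2\partial f|^2_{g_t}\ge0$, which can be dropped.
\end{itemize}
Your fallback of using the conformal factor on the cover points in this direction, but without these two ingredients it yields no estimate.

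Two smaller corrections in Step 2:
\begin{itemize}
\item On a finite interval with uniformly positive reference forms, both bounds on $\phi$ are immediate from the maximum principle. No elliptic $C^0$ estimate is needed, and Theorem~\ref{main1} does not prove one.
\item The quantity $t\dot\phi-\phi$ gives the \emph{upper} bound on $\dot\phi$. The lower bound comes from $(\tau-t)\dot\phi+\phi$, using positivity of the reference form at the endpoint time $\tau$, as in \cite[Lemmas 5.3, 5.4]{BGV23}.
\end{itemize}
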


The next natural problem is the study of the asymptotic behaviour of the flow for $t\to T$. As pointed out in \cite{BS26}, this is a  delicate problem. Let $(M,I,J,K,g)$ be a compact HKT--Einstein manifold and let $g_0$ be an HKT metric on $(M,I,J,K)$ whose $I$-fundamental form $\omega_0$ belongs to  $[\omega]_{\rm HKT}$. It is quite natural to wonder if the normalized flow
\begin{equation}\label{nflow}
\partial_tg_t=-{\rm Ric}_I(g_t)^{+}+c\,g_t\,,\quad g_{|t=0}=g_0\,,
\end{equation}
converges to $g$ whenever Conjecture \eqref{conj} holds. This happens, for instance, when $g$ is hyperk\"ahler and $c=0$ \cite{BGV23}. As pointed out in \cite{BS26}, things work differently on the quaternionic Hopf surfaces $H_\lambda$, where there are non-equivalent HKT--Einstein metrics in the fixed HKT class of the canonical HKT--Einstein metric $g_H$. 
      
\begin{teor}\label{main3}
For $\lambda>{\rm e}^{\pi}$, the quaternionic Hopf surface $H_\lambda$ admits a hyperhermitian metric $h$ satisfying the soliton equation 
\begin{equation}
{\rm Ric}_I(h)^{+}=h-\mathcal{L}_{V}(h)\,,\quad  \mathcal{L}_{V}(h)\neq 0\,, 
\end{equation}
where $V=-\frac12 \theta^{\sharp}$, and $\theta$ is the Lee form of $g_H$. 
Moreover, if $g_t$ is a long-time solution of \eqref{nflow} on $H_\lambda$ such that 
$$
c=\frac{2\,\mathrm{Vol}(g_H)}{\displaystyle\int_{H_\lambda}\mathrm{tr}_{g_H}g_0\,d\mu_{g_H}}\,,\quad  \mathcal{L}_V(g_0)=0\,,
$$ 
and satisfying one of the following two conditions
\begin{enumerate}
\item[1.] $C^{-1}g_H\leq g_t\leq C g_H$ for every $t\in [0,\infty)$ and a uniform constant $C$;

\vspace{0.1cm}
\item[2.] $\mathcal{L}_{IV}g_0=0$;
\end{enumerate}
then $g_t$ converges smoothly to an HKT--Einstein metric as $t\to \infty$. 
\end{teor}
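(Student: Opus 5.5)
Both parts rest on the explicit structure of $H_\lambda=(\mathbb H\setminus\{0\})/\langle x\mapsto\lambda x\rangle\cong S^1\times S^3$, with $g_H=|x|^{-2}g_{\rm flat}$, Lee form $\theta=-2\,d\log|x|$ (up to normalization), and $V=-\frac12\theta^\sharp$ a multiple of the radial field $r\partial_r$. The field $IV$ is a left-invariant vector field on $S^3$ and generates a circle action.

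\textbf{Soliton.} I would look for $h$ among metrics invariant under the $U(2)$-type symmetry preserving $(I,J,K)$ and $V$. Write $t=\log r\in\R/\log\lambda\,\Z$ and $\sigma_1,\sigma_2,\sigma_3$ for the left-invariant coframe on $S^3$, with $I$ pairing $dt\leftrightarrow\sigma_1$ and $\sigma_2\leftrightarrow\sigma_3$. For hyperhermitian metrics the natural ansatz is
\[
h=a(t)\,(dt^2+\sigma_1^2+\sigma_2^2+\sigma_3^2),
\]
since a hyperhermitian metric is determined by a single function on this symmetric ansatz; more flexibility, such as $a(t,\cdot)$ depending on the fiber direction of $IV$, may be needed. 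Indeed a function of $t$ alone gives $\mathcal L_Vh=a'(t)\,(\cdots)$, which is nonzero exactly when $a$ is nonconstant. Computing $\mathrm{Ric}_I(h)=-i\partial\bar\partial\log\det h$ via the local conformally flat formula, and taking the $J$-invariant part, the soliton equation reduces to a nonlinear periodic ODE for $u=\log a$ of the form
\[
u''+\alpha u'+\beta(e^{u}-1)=0
\]
on a circle of length $\log\lambda$, with constants $\alpha,\beta$. The constant solution $u=0$ is $g_H$. I would get a nonconstant periodic solution by bifurcation: the linearization $u''+\alpha u'+\beta u$ has periodic solutions of period $\log\lambda$ only when $\log\lambda$ passes a critical value, and the condition $\lambda>{\rm e}^{\pi}$, i.e.\ $\log\lambda>\pi$, suggests the threshold where the first Fourier mode $\cos(2\pi t/\log\lambda)$ becomes admissible. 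Alternatively, a phase-plane/shooting argument produces a nonconstant closed orbit of period exactly $\log\lambda$, after checking that the period function ranges over $(\pi,\infty)$.

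\textbf{Convergence.} For the second part I would reduce the flow \eqref{nflow} to a scalar parabolic quaternionic Monge--Amp\`ere flow
\[
\partial_t\phi=\log\frac{(\Omega_H+\partial\partial_J\phi)^2}{\Omega_H^2}+c\,\phi,
\]
as in \cite{BGV23}. The choice of $c$ as the given volume/trace ratio is exactly the normalization making the relevant integral quantity conserved, which pins down the limiting constant. Invariance $\mathcal L_Vg_0=0$ is preserved by uniqueness, so $\phi$ is independent of $t$-radial direction and effectively lives on $S^3$. Under condition 1 the uniform equivalence gives, via Evans--Krylov/Dinew--Sroka type estimates, all higher-order bounds. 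A Perelman-type monotone functional, for instance the Chern scalar curvature monotonicity of \cite{BS26} or an Aubin--Yau energy, then gives subsequential convergence to a stationary $V$-invariant metric. Such a metric is HKT--Einstein, and since the solitons are not $V$-invariant, they are excluded. Under condition 2 the extra symmetry reduces the problem to functions on $S^2=S^3/S^1$, where I expect to derive the $C^0$ and metric-equivalence bounds directly by the maximum principle, reducing to case 1.

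\textbf{Main obstacle.} The hardest step is the explicit curvature computation giving the ODE, together with verifying that the periodic solution exists precisely for $\lambda>{\rm e}^\pi$. The second hardest is the uniform $C^0$ bound in case 2.
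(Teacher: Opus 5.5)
\textbf{Soliton part.} Your ansatz $h=e^{u(\log|q|)}g_H$ is the right one, and it is the paper's; no dependence along the $IV$-fibres is needed. The gap is the ODE. The one you anticipate is not the one that arises, and the form you guess would actually rule out solitons. Writing out ${\rm Ric}_I(h)^+=h-\mathcal L_Vh$ gives $1-\frac14u''-\frac12u'=e^u-\frac12u'e^u$, i.e.
\[
u''+2(1-e^u)u'+4(e^u-1)=0\,,
\]
whose damping coefficient $2(1-e^u)$ depends on $u$ and changes sign. Compare your model $u''+\alpha u'+\beta(e^u-1)=0$ with constant $\alpha\neq0$. There the energy $E=\frac12u'^2+\beta(e^u-u-1)$ satisfies $E'=-\alpha u'^2$, so neither the equation nor its linearization has nonconstant periodic solutions. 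For $\alpha=0$ it is simply not the equation that arises. Either way, neither your bifurcation argument nor your shooting argument can get started.

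The missing idea is that the true equation has the first integral $H(u,v)=e^u-u-1-\frac v2-\log(1-\frac v2)$ on $\{v<2\}$. This function is proper, and its only critical point and only zero are at the origin, so every nontrivial orbit is closed. The period $\mu(a)$ of the orbit through $(a,0)$ is then continuous. It tends to $\pi$ as $a\to0$, because the linearization is $w''+4w=0$; this, not a Fourier-mode threshold, is where ${\rm e}^\pi$ comes from. It also satisfies $\mu(a)\ge a/2$, because $u'<2$. Hence every period in $(\pi,\infty)$ is attained. A local bifurcation at $\log\lambda=\pi$ would only give solitons for $\lambda$ near ${\rm e}^\pi$, not for all $\lambda>{\rm e}^\pi$.

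\textbf{Flow part.} In real dimension $4$ every hyperhermitian metric is conformal to $g_H$, so $g_t={\rm e}^fg_H$. Under $\mathcal L_Vg_0=0$ the flow becomes the scalar equation $\partial_tf=\frac14{\rm e}^{-f}\Delta_{S^3}f+1-{\rm e}^{-f}$ on $S^3$. No Monge--Amp\`ere machinery is needed, and condition 1 is just $|f|\le C_0$.

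Your case-1 argument is incomplete in two ways.
\begin{itemize}
\item You never exhibit a functional whose time derivative controls $\partial_tf$. The paper uses $\mathcal F(u)=\int_{S^3}\bigl(\frac18|\nabla u|^2+u-{\rm e}^u\bigr)d\mu_{S^3}$, for which $\nabla\mathcal F(f)=-{\rm e}^f\partial_tf$ and $\frac{d}{dt}\mathcal F(f)=-\int_{S^3}{\rm e}^f(\partial_tf)^2\,d\mu_{S^3}$.
\item More seriously, a monotone functional only yields convergence along some sequence $t_j\to\infty$, while the theorem asserts convergence of $g_t$ itself. Stationary points need not be isolated a priori, so you need an argument for uniqueness of the limit. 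The paper supplies this via the \L{}ojasiewicz--Simon inequality for the analytic functional $\mathcal F$.
\end{itemize}

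In case 2, your maximum-principle route to the $C^0$ bound fails. On $S^2$ the equation is ${\rm e}^f\partial_tf=\Delta_{S^2}f+{\rm e}^f-1$. At a maximum (resp.\ minimum) one only gets $\partial_tf\le1-{\rm e}^{-f}$ (resp.\ $\ge$). Since $\int_{S^2}{\rm e}^f=4\pi$ forces $\max f\ge0\ge\min f$, these inequalities give no control at all. The conserved constraint must be combined with a Moser--Trudinger inequality. The paper does this by recognizing the flow as Cast\'eras' mean field type flow with $\rho=4\pi<8\pi$. Its convergence in $H^2(S^2)\hookrightarrow C^0(S^2)$ gives condition 1.
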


Next we describe the structure of the proofs:\\
Following Yau's proof of the classical Calabi conjecture \cite{Yau78}, the natural approach to study equation \eqref{CY} is via a priori estimates on the solutions. The $C^0$ estimate is completely understood  \cite{AS17,AV10,Sro20,Sro24}, and the only missing estimate is an upper bound on the Chern Laplacian of the solution (see e.g. \cite[Theorem 3.1]{BGV23} and \cite[Proposition 7]{GV26}). In order to show this bound in the setting of Theorem \ref{main1}, we consider the test function 
\[
Q:=\log \mathrm{tr}_{\tilde g}g_\phi -\log(1+\|\varphi\|_{C^0}+\phi)\,,
\]
where $g_\phi$ is the HKT metric corresponding to $\Omega+ \partial \partial_J \phi$. The result is then obtained by manipulating the inequality $\Delta_\varphi Q\leq 0$ at a maximum point $x_0$ of $Q$ with the aid of the critical equation $\partial Q(x_0)=0$. An important technical tool in our computation is an adaptation of a proposition in Alesker's paper \cite{Ale13} to our case. Our argument does not adapt to the general case without the LCHK assumption (see Remark \ref{rem}). The proof of Theorem \ref{main2} uses a similar argument. 

The construction of the soliton in Theorem \ref{main3} is obtained by reducing the problem to an ODE on the function $u$ which determines the metric by ${\rm e}^{u(\log |q|)}g_H$. For the second part of the statement, the condition $\mathcal{L}_{V}g_0=0$ allows us to regard the flow as a flow on the $3$-sphere $S^3$, while the additional assumption $\mathcal{L}_{V}g_0=\mathcal{L}_{IV}g_0=0$ reduces the flow to one on $S^2$. In this last case the flow coincides with one considered by Castéras in \cite{Cas15}.

\bigskip 
\noindent {\bf Notation.} In this paper, we adopt the following notation: all tensor components are expressed with respect to complex coordinates that are holomorphic with respect to the first complex structure~$I$. We use a comma in the indices to denote partial derivatives. When differentiating functions, the comma is omitted.

\bigskip 
\noindent {\bf Acknowledgments.} The authors are grateful to Beatrice Brienza and Jeffrey Streets for useful discussions and their interest in this work.

\section{Preliminaries}\label{preliminaries}

A hypercomplex manifold is a smooth manifold of real dimension $4n$ equipped with a triple of complex structures $I,J,K$ satisfying the quaternionic relations
\[
IJ=-JI=K\,. 
\]
A hyperhermitian metric on a hypercomplex manifold $(M,I,J,K)$ is a Riemannian metric $g$ which is compatible with each complex structure, i.e., 
$$
g(I\cdot,I\cdot)=g(J\cdot,J\cdot)=g(K\cdot,K\cdot)\,. 
$$
The metric $g$ induces the three fundamental forms 
$$
\omega(\cdot,\cdot):=g(I\cdot ,\cdot)\,,\quad \omega_J(\cdot,\cdot):=g(J\cdot ,\cdot)\,,\quad \omega_K(\cdot,\cdot):=g(K\cdot ,\cdot)
$$
and the form  
$$
\Omega:=\omega_J+i\omega_K\,.
$$
The form $\Omega$ is of type $(2,0)$ with respect to $I$ and satisfies the {\em q-reality} condition 
\begin{equation}\label{q-real}
\Omega(J\cdot ,J\cdot)=\overline{\Omega(\cdot,\cdot)}\,,
\end{equation}
as well as the \emph{q-positivity} condition 
\begin{equation}\label{positive}
\Omega(Z,J\bar Z)>0
\end{equation}
for every non-zero vector field $Z$ of type $(1,0)$ with respect to $I$. Moreover, there is a one-to-one correspondence 
between hyperhermitian metrics and $(2,0)$-forms satisfying \eqref{q-real} and \eqref{positive} since 
every $\Omega\in \Lambda^{2,0}_I$  satisfying \eqref{q-real} and \eqref{positive} induces a hyperhermitian metric $g$ via the relation 
\[
g=\Re (\Omega(\cdot,J\cdot))\,.
\]
 
Alternatively, a hyperhermitian metric can be given in terms of its $I$-fundamental form, since the Hermitian metric induced by a positive real form in $\Lambda^{1,1}_I$ which satisfies the $J$-anti-invariant relation
$$
J\omega=-\omega
$$
is hyperhermitian.




A hyperhermitian metric is hyperk\"ahler if and only if the HKT form is closed. In this case the three Bismut connections induced by $(g,I)$, $(g,J)$ and $(g,K)$ agree with the Levi-Civita connection of $g$. In \cite{HP96} Howe and Papadopoulos generalized the notion of hyperk\"ahler manifold to geometry with torsion by introducing {\em HKT manifolds}. According to the definition given in \cite{HP96}, a hyperhermitian manifold $(M,I,J,K,g)$ is HKT (hyperk\"ahler with torsion), if the  three Bismut connections induced by $(g,I)$, $(g,J)$ and $(g,K)$ coincide. Later it was proved in \cite{GP00} that the HKT condition can be characterized in terms of the HKT form $\Omega$ as 
$$
\partial \Omega=0\,,
$$
where $\partial$ is with respect to $I$.

Let $(M,I,J,K,g)$ be an HKT manifold and consider the operator $\partial_J:= J^{-1}\bar\partial J$. Since $J$ anti-commutes with $I$, we have $\partial_J\colon \Lambda^{r,0}_I\to \Lambda^{r+1,0}_I$ and if $\varphi$ is a real smooth function, then 
$\partial \partial_J\varphi\in\Lambda^{2,0}_I$.  Moreover $\varphi$ is {\em $\Omega$-plurisubharmonic} if $\Omega+\partial \partial_J\phi$ is q-positive. In this case we write 
$$
\Omega+\partial \partial_J\varphi>0
$$
and we set $\Omega_\varphi:=\Omega+\partial \partial_J\varphi$. Note that any {\em $\Omega$-plurisubharmonic} function induces a hyperhermitian metric which we denote by $g_\varphi$ and the $I$-fundamental form $\omega_\phi$ of $g_{\varphi}$ is given by 
$$
\omega_{\varphi}=\omega+(i\partial \bar\partial \varphi)^{-}\,.
$$
This naturally leads us to consider the classes $[\omega]_{\rm HKT}$ and $c_1^{\rm HKT}(M)$ given in the Introduction.
Note that $c_1^{\rm HKT}(M)$ does not depend on the choice of the hyperhermitian metric. Indeed, if $\omega'$ is another hyperhermitian metric, then $\mathrm{Ric}_I(\omega')-\mathrm{Ric}_I(\omega)=-i\partial\bar\partial\log(\omega'^{2n}/\omega^{2n})$, so that $[\mathrm{Ric}_I(\omega')^-]_{\rm HKT}=[\mathrm{Ric}_I(\omega)^-]_{\rm HKT}$.

It follows directly from the definition \eqref{Einsteinlambda} that the existence of a HKT--Einstein metric with non-zero Einstein constant forces $c_1^{\rm HKT}(M)$ to have a sign. In the compact case, it was shown in \cite[\S 6.2]{FG26} that $c_1^{\rm HKT}(M)<0$ is not allowed. Other consequences and obstructions have been investigated in \cite{BFGV26,GL25}. Compact positive HKT--Einstein manifolds have been found in \cite{BM,FG26}.

\medskip 
Next we focus on LCHK manifolds. These are hypercomplex manifolds $(M,I,J,K)$ admitting a hyperhermitian metric $g$ which is locally conformal to a hyperk\"ahler one, i.e. any point $p$ of $M$ has an open neighborhood $U$ such that 
$$
g={\rm e}^f \,\hat g\quad \mbox{ on }U\,,
$$
where $f\colon U\to \R$ is a smooth function and $\hat g$ is a hyperk\"ahler metric on $(U,I,J,K)$. Equivalently, there exists a closed $1$-form $\theta$ such that $d\Omega=\theta \wedge \Omega$ (see \cite{OO19}). Examples of compact non-K\"ahler LCHK manifolds are given by {\em quaternionic Hopf manifolds} defined 
as
\[
H_\lambda^n
:=
\bigl(\mathbb H^n\setminus\{0\}\bigr)/
\sim \,,\quad q\sim \lambda^k q\,,\quad k\in\Z\,,
\]
where $\lambda>1$. A full classification of compact homogeneous LCHK manifolds is given in \cite{OP97}. We also mention \cite{GOPP06} where LCHK reductions and their relation to HKT geometry are studied. By Verbitsky's structure theorem~\cite[Theorem~11.6]{Ver04}, every compact LCHK manifold which is not
hyperk\"ahler is the quotient of a hyperk\"ahler cone over a compact $3$-Sasakian manifold by the cyclic group generated by a dilation composed with a $3$-Sasakian automorphism. More precisely, if  $(M^{4n},I,J,K,g)$ is a compact locally conformally hyperk\"ahler manifold which is not hyperk\"ahler, there exist a compact $3$-Sasakian manifold $(X^{4n-1},g_X)$, a $3$-Sasakian automorphism $\psi\colon X\to X$, and a number $\lambda>1$ such that
\[
M \cong C(X)/\langle\gamma\rangle\,,
\qquad
\gamma(x,r)=(\psi(x),\lambda r)\,,
\]
where
\[
C(X)=X\times(0,\infty)\,,
\qquad
g_C=dr^2+r^2g_X
\]
is the hyperkähler cone over $X$.

The identification preserves the hypercomplex structures and identifies the conformal class $[g]$ with the one of the metric $g_0$ induced by $r^{-2}g_C$. In particular, setting $t=\log r$, this metric is
\[
g_0=dt^2+g_X\,,
\qquad
(x,t)\sim(\psi(x),t+\log\lambda)\,.
\]
Conversely, every such quotient is a compact LCHK manifold which is not hyperk\"ahler. Quaternionic Hopf manifolds correspond to the case $X=S^{4n-1}$ and $\psi={\rm Id}$. 

\medskip 
In view of \cite[Theorem~14]{OPS03}, every compact non-hyperk\"ahler LCHK manifold  $(M^{4n},I,J,K,\tilde g)$ admits
an HKT metric $g$. This metric is defined as follows:\\
Up to conformally rescaling $\tilde g$ we may assume without loss of generality that $(\tilde g,I)$ is {\em Gauduchon}. This means that the $I$-fundamental form $\tilde \omega$ of $\tilde g$ satisfies 
$$
\partial\bar\partial\tilde \omega^{2n-1}=0\,.
$$
The Gauduchon condition combined with the LCHK assumption implies that the Lee form 
$$
\tilde\theta:=-\frac{1}{2n-1}Id^*_{\tilde g}\tilde \omega
$$ 
of $(\tilde g,I)$ is parallel with respect to the Levi-Civita connection of $\tilde g$ \cite{Gau95,OPS03}. Since $\tilde\theta$ is parallel, it has constant length, and up to rescaling $\tilde g$ we may assume $|\tilde\theta|_{\tilde g}=1$. Then $g$ is defined as 
\begin{equation}\label{eq:OPSconstr}
g=n\tilde  g- \frac{n}{2}  \left(\tilde\theta \otimes \tilde\theta+I\tilde\theta \otimes I\tilde\theta+J\tilde\theta \otimes J\tilde\theta+K\tilde\theta \otimes K\tilde\theta\right).
\end{equation}
If we regard $M$ as a quotient of the cone on a $3$-Sasaki manifold, $g$ is the metric with fundamental form $\omega$ induced by the projection of 
$$
2n (i\partial \bar\partial \log(r^2))^{-}\,.
$$
Here the minus denotes the $J$-anti-invariant component with respect to the hypercomplex structure of the cone. 

\begin{prop}
The metric $g$ satisfies the HKT--Einstein equation
\[
g=\mathrm{Ric}_I(g)^+\,.
\]
\end{prop}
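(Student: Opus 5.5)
The plan is to compute everything on the hyperk\"ahler cone $C(X)$, where all the objects have explicit potentials, and then observe that the identity descends to $M=C(X)/\langle\gamma\rangle$. Two facts are used throughout. First, the fundamental form of $g$ pulled back to the cone is $\omega=2n\,(i\partial\bar\partial\log r^2)^-$, as recalled above. Second, the identity to be proved can be rewritten in terms of forms. The dictionary between $J$-invariant symmetric $I$-Hermitian tensors and $J$-anti-invariant real $(1,1)$-forms is $\psi\mapsto \psi(I\cdot,\cdot)$. Under it, $\mathrm{Ric}_I(g)^+$ corresponds to $\mathrm{Ric}_I(\omega)^-$ and $g$ corresponds to $\omega$. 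So the statement is equivalent to
\[
\mathrm{Ric}_I(\omega)^-=\omega .
\]
I would first fix the sign conventions in this dictionary (via $J\omega=-\omega$ and $\omega(\cdot,\cdot)=g(I\cdot,\cdot)$), since a sign slip there would flip the Einstein constant.

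\emph{Step 1: a reference Ricci-flat metric.} The cone metric $g_C$ is hyperk\"ahler. Its holomorphic volume form $\Omega_C^n$ is parallel, so the $I$-Chern--Ricci form of $\omega_C$ vanishes. For any hyperhermitian $\omega$ on the cone we then get
\[
\mathrm{Ric}_I(\omega)=-i\partial\bar\partial\log\frac{\omega^{2n}}{\omega_C^{2n}} .
\]

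\emph{Step 2: the volume ratio.} Pulled back to the cone, $\tilde g$ is conformal to $r^{-2}g_C$; after the Gauduchon normalization and $|\tilde\theta|=1$ it equals $r^{-2}g_C$ up to a positive constant. Formula \eqref{eq:OPSconstr} shows that $g$ has eigenvalues with respect to $\tilde g$ equal to $n$ on the $(4n-4)$-dimensional $\tilde g$-orthogonal complement of $\mathrm{span}\{\tilde\theta,I\tilde\theta,J\tilde\theta,K\tilde\theta\}^\sharp$, and equal to $n/2$ on that $4$-dimensional span. Hence $\det_{\tilde g}g$ is a constant, and therefore
\[
\omega^{2n}=c\, r^{-4n}\,\omega_C^{2n}
\]
for some constant $c>0$.

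\emph{Step 3: conclude.} Steps 1 and 2 give
\[
\mathrm{Ric}_I(\omega)=-i\partial\bar\partial\log r^{-4n}=2n\, i\partial\bar\partial\log r^2 .
\]
Taking the $J$-anti-invariant part gives $\mathrm{Ric}_I(\omega)^-=2n(i\partial\bar\partial\log r^2)^-=\omega$. The generator $\gamma$ is a hyperholomorphic map that multiplies $r$ by $\lambda$, so $\log r^2$ changes by the constant $2\log\lambda$. Both sides are therefore $\gamma$-invariant and the identity descends to $M$. (Alternatively, the whole computation is local, so one can simply work on a fundamental domain.)

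The main obstacle is bookkeeping rather than conceptual. One must check that the normalizations make the constant exactly $1$: the factor $2n$ in $\omega$ has to match the exponent $-4n$ of $r$ in the volume ratio. This relies on identifying the pulled-back metric $\tilde g$ with $r^{-2}g_C$ exactly, including the normalization $|\tilde\theta|=1$, which corresponds to $\tilde\theta=\pm 2\,d\log r$ up to the chosen convention for the Lee form. One must also check that the sign conventions for the Chern--Ricci form and for $\psi^{+}\leftrightarrow\beta^{-}$ are consistent. I would verify all of this on the flat model $\mathbb H^n\setminus\{0\}$, i.e. the Hopf case, where $\partial\bar\partial\log|q|^2$ can be computed by hand.
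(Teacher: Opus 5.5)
Your proof is correct, but it takes a different route from the paper's.

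Both arguments start from the same observation: $\det_{\tilde g}g$ is constant (in the paper, $\Omega^n=\frac{n^n}{2}\tilde\Omega^n$).

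The paper converts this into a Lee-form identity. From $\alpha_\Omega=\alpha_{\tilde\Omega}$, together with $\alpha_\Omega=(2n-1)\theta^{1,0}$ for HKT metrics and $\alpha_{\tilde\Omega}=n\,\partial f$, it gets $\theta=\frac{n}{2n-1}\tilde\theta$. It then combines the OPS potential formula $\omega=\frac n2\bigl(d(I\tilde\theta)+d^c_J(K\tilde\theta)\bigr)$ with the identity $dI\theta=\frac{1}{2n-1}\mathrm{Ric}_I(\omega)$ from \cite{FG26}.

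You instead compute $\mathrm{Ric}_I(\omega)$ directly, as $-i\partial\bar\partial\log$ of the volume ratio against a Ricci-flat reference. You then compare it with the potential description $\omega=2n(i\partial\bar\partial\log r^2)^-$. This avoids the Lee form of $g$ and both cited identities. The paper's route, by contrast, stays on $M$ and does not need Verbitsky's cone picture.

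The one step you assert without justification is that the normalized Gauduchon metric $\tilde g$ pulls back to a constant multiple of $r^{-2}g_C$ (in fact $4r^{-2}g_C$). You should state why:
\begin{itemize}
\item $r^{-2}g_C=dt^2+g_X$ has parallel, hence coclosed, Lee form, so it is Gauduchon;
\item Gauduchon metrics are unique up to homothety in a conformal class.
\end{itemize}
This is not cosmetic. A non-constant conformal factor $e^{h}$ would add $-2n\,i\partial\bar\partial h$ to the Ricci form.

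Finally, the cone is not actually needed. Write $\tilde g=e^f\hat g$ locally on $M$, with $\hat g$ hyperk\"ahler. Your Steps 1--2 then give $\mathrm{Ric}_I(\omega)=-2n\,i\partial\bar\partial f$ directly. The potential description becomes $\omega=-2n(i\partial\bar\partial f)^-$, which is the local form of the statement that $n\tilde\theta=n\,df$ is an HKT potential for $g$. Written this way, your argument is a more direct version of the computation the paper carries out through Lee forms.
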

\begin{proof}
In \cite{OPS03} it is proved that $g$ is an HKT metric with potential $1$-form $n\tilde \theta$, in other words
\[
\omega=\frac{n}{2}\left(d(I\tilde \theta)+d^c_J(K\tilde \theta)\right)\,.
\]
In our conventions, \cite[Proposition 2.6(b)]{FG26} says
\[
dI\theta=\frac{1}{2n-1}\mathrm{Ric}_I(\omega)\,,
\]
where $\theta$ is the Lee form of $g$. We claim that $\theta=\frac{n}{2n-1}\tilde \theta$. Assuming the claim we deduce
\[
\omega=\frac{2n-1}{2}\left(d(I \theta)+d^c_J(K\theta)\right)
=\frac{1}{2}\left(\mathrm{Ric}_I(\omega)-J\mathrm{Ric}_I(\omega)\right)
=\mathrm{Ric}_I(\omega)^-\,,
\]
as desired, so it only remains to prove that $\theta=\frac{n}{2n-1}\tilde \theta$. From the construction of $g$ it is not difficult to see that
\[
\Omega^{n}=\frac{n^n}{2}\tilde \Omega^{n}\,,
\]
therefore, we deduce $\alpha_{\Omega}=\alpha_{\tilde \Omega}$, where $\alpha_\Omega$ is the $(1,0)$-form such that $\partial \bar \Omega^n=\alpha_\Omega \wedge \bar \Omega^n$ and analogously for $\alpha_{\tilde \Omega}$. On the other hand, since $\Omega$ is HKT $\alpha_{\Omega}=(2n-1)\theta^{1,0}$ (see \cite[Lemma 2.2]{BGV23}), whereas locally $\tilde \Omega=\mathrm{e}^f\hat \Omega$, where $\hat \Omega$ is the $(2,0)$ form associated to a local hyperk\"ahler metric, hence
\[
\alpha_{\tilde \Omega}\wedge \bar{\tilde \Omega}^n
=\partial \bar{\tilde \Omega}^n
=\partial (\mathrm{e}^{nf}\bar{\hat \Omega}^n)
=n\partial f \wedge \mathrm{e}^{nf}\bar{\hat \Omega}^n
=n\partial f \wedge \bar{\tilde \Omega}^n
\]
implying $\alpha_{\tilde \Omega}=n\partial f=n\tilde \theta^{1,0}$.
\end{proof}

\begin{es}\label{Example}
The  metric
$$
\tilde g=\frac{4}{\,|q|^2}g_{\rm Eu}\,,
$$
on  $\mathbb H^n\setminus\{0\}$, where $g_{\rm Eu}$ is the Euclidean metric, induces an LCHK metric on the Hopf manifold $H_\lambda^n$. The induced HKT metric defined by \eqref{eq:OPSconstr} is the projection to $H^n_\lambda$ of the hyperhermitian metric $g_H$ corresponding to the form 
$$
\omega=2n \left(i \partial\bar \partial (\log |q|^2)\right)^{-}
$$
on $\mathbb H^n\setminus\{0\}$.
\end{es}

\section{Proof of Theorem \ref{main1}}
Now we focus on the proof of Theorem \ref{main1}. Let $(M^{4n},I,J,K,g)$ be a compact HKT manifold and fix the datum $F\in C^{\infty}(M)$. By assumption there exists a LCHK metric $\tilde g$ on $(M,I,J,K)$. In order to prove Theorem \ref{main1}, it is enough to show that $\mathrm{tr}_{\tilde g}g_\phi$ has an a priori upper bound \cite[Proposition 7]{GV26}. Furthermore, since the $C^0$ estimate has been established, such upper bound can depend on the $C^0$ norm of the solution. 

\medskip
We assume without loss of generality that $(\tilde  g,I)$ is Gauduchon in order to have the Lee form of $\tilde  \omega$ parallel with respect to the Levi-Civita connection of $\tilde  g$. 

\medskip 
Let $(\varphi,b)$ be a solution to \eqref{CY}.
Consider the test function
\begin{equation} \label{Q}
Q:=\log \mathrm{tr}_{\tilde g}g_\phi -\log(A+\phi)\,,
\end{equation}
where $A=\|\phi\|_{C^0}+1$. Denote $\Delta_\phi u:=g_\phi^{r\bar s}u_{r\bar s}$ the Chern-Laplacian with respect to $g_\phi$. We have
\[
\Delta_\phi Q=
\frac{\Delta_\phi\mathrm{tr}_{\tilde g}g_\phi}
{\mathrm{tr}_{\tilde g}g_\phi}
-\frac{|\partial\mathrm{tr}_{\tilde g}g_\phi|_{g_\phi}^2}
{(\mathrm{tr}_{\tilde g}g_\phi)^2}+\frac{\mathrm{tr}_{g_\phi}g-2n}{A+\phi}
+\frac{|\partial\phi|_{g_\phi}^2}{(A+\phi)^2}\,,
\]
where we have used
\[
\Delta_\phi \phi=\mathrm{tr}_{g_\phi}(i\partial \bar \partial \phi)=\frac{1}{2}\mathrm{tr}_{g_\phi}(i\partial \bar \partial \phi-iJ\partial \bar \partial \phi)=\mathrm{tr}_{g_\phi}g_\phi-\mathrm{tr}_{g_\phi}g=2n-\mathrm{tr}_{g_\phi}g\,.
\]
Assume that $Q$ achieves a maximum at $x_0\in M$. From $\partial Q(x_0)=0$ we infer 
\begin{equation}\label{pQ=0}
\frac{\partial\mathrm{tr}_{\tilde g}g_\phi}
{\mathrm{tr}_{\tilde g}g_\phi}
=\frac{\partial\phi}{A+\phi} \quad \mbox{ at }x_0\,,
\end{equation}
furthermore
\[
0\geq\Delta_\phi Q
=
\frac{\Delta_\phi\mathrm{tr}_{\tilde g}g_\phi}
{\mathrm{tr}_{\tilde g}g_\phi}
+\frac{\mathrm{tr}_{g_\phi}g-2n}{A+\phi} \quad \mbox{ at }x_0\,.
\]
Write $\tilde g={\rm e}^f\hat g$ in an open neighbourhood  $U$ of $x_0$, where  
$\hat g$ is hyperk\"ahler.
Using 
\[
\mathrm{tr}_{\tilde g}g_\phi
={\rm e}^{-f}\mathrm{tr}_{\hat g}g_\phi\,,
\]
we infer 
\begin{equation}\label{A-team}
0\geq
\frac{\Delta_\phi\mathrm{tr}_{\hat g}g_\phi}
{\mathrm{tr}_{\hat g}g_\phi}
-\Delta_\phi f+|\partial f|_{g_\phi}^{2}-\frac{2}{\mathrm{tr}_{\hat g}g_\phi}
\mathrm{Re}\langle\partial f,\partial\mathrm{tr}_{\hat g}g_\phi\rangle_{g_\phi}
+\frac{\mathrm{tr}_{g_\phi}g-2n}{A+\phi} \quad \mbox{ at }x_0\,.
\end{equation}
We have the following

\begin{lem}\label{Gauduchon}
For every hyperhermitian metric $h$ on $(U,I,J,K)$ we have
\[
\Delta_h f+|\partial f|_h^2\leq 0\,.
\]
\end{lem}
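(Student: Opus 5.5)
The plan is to rewrite the left-hand side as a trace and reduce the inequality to a pointwise sign condition on a $(1,1)$-form built only from $f$. Since $h$ is hyperhermitian, its $I$-fundamental form is $J$-anti-invariant, so for every real $(1,1)$-form $\alpha$ we have $\mathrm{tr}_h\alpha=\mathrm{tr}_h\alpha^-$. Moreover
\[
\Delta_h f+|\partial f|^2_h=\mathrm{tr}_h\big(i\partial\bar\partial f+i\partial f\wedge\bar\partial f\big)={\rm e}^{-f}\,\mathrm{tr}_h\big(i\partial\bar\partial\, {\rm e}^{f}\big)\,.
\]
Hence it suffices to prove that the $J$-anti-invariant form $(i\partial\bar\partial\,{\rm e}^f)^-$ is semi-negative on $U$. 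This condition does not involve $h$ at all, which explains why the lemma holds for every hyperhermitian $h$.

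Next I identify $f$ explicitly. If $M$ is hyperk\"ahler there is nothing interesting to prove: then $\theta=0$, $f$ is constant up to the choice of $\hat g$, and both sides vanish. Otherwise I use Verbitsky's structure theorem recalled in Section \ref{preliminaries}. Locally $U$ is an open subset of the hyperk\"ahler cone $C(X)$, and $\tilde g$ is a constant multiple of $r^{-2}g_C$; the normalisation $|\tilde\theta|_{\tilde g}=1$ fixes that constant. A local hyperk\"ahler metric $\hat g$ in the conformal class is determined up to a homothety, because $df$ must equal the Lee form of $\tilde g$ (as in the proof of the Proposition, $\alpha_{\tilde\Omega}=n\partial f$). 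Therefore $\hat g=c\,g_C$ and ${\rm e}^f=c'\,r^{-2}$ for some positive constant $c'$.

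The main step is then a computation on the cone. Set $s=r^2$. This is a K\"ahler potential for $g_C$ with respect to $I$ (and also $J$ and $K$), so $i\partial\bar\partial s$ is a fixed multiple of the fundamental form $\omega_C$, which is already $J$-anti-invariant. With $\phi(s)=1/s$ we get
\[
i\partial\bar\partial\, s^{-1}=-s^{-2}\,i\partial\bar\partial s+2s^{-3}\,i\partial s\wedge\bar\partial s\,.
\]
The form $i\partial s\wedge\bar\partial s$ corresponds to the Hermitian form $4r^2(dr^2+(Idr)^2)$. Its $J$-anti-invariant part averages this over $J$ and gives a multiple of $r^2\big(dr^2+(Idr)^2+(Jdr)^2+(Kdr)^2\big)$. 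Since $dr,Idr,Jdr,Kdr$ are $g_C$-orthonormal, this is bounded above by $r^2g_C$. The outcome should be that $(i\partial\bar\partial s^{-1})^-$ is $\le 0$: it should vanish on the quaternionic line spanned by $\partial_r$ and be strictly negative on its $g_C$-orthogonal complement.

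The step I expect to be the main obstacle is getting the normalisation constants right, so that the two terms cancel exactly along $\partial_r$ rather than leaving a positive remainder. As a sanity check I would use $H^1_\lambda$. There $f=-\log|q|^2+{\rm const}$, and $|q|^{-2}$ is harmonic on $\R^4\setminus\{0\}$, so $\Delta_h f+|\partial f|_h^2=0$ for $h=g_{\rm Eu}$; this is consistent with equality in the quaternionic direction. If the cone picture turns out to be awkward, an intrinsic alternative is available. Write $\partial f=\tilde\theta^{1,0}$ and use that $\tilde\theta$ is $\tilde g$-parallel of unit length. Then express $i\partial\bar\partial f$ through $\nabla\tilde\theta=0$ and the torsion of $\tilde g$ relative to the Levi-Civita connection, which should give the same $(1,1)$-form: $-\tfrac12(\tilde\omega-\text{quaternionic span of }\tilde\theta)$ up to a constant factor.
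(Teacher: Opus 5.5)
Your route is valid and differs from the paper's; two steps need repair. The reduction is sound: $\Delta_hf+|\partial f|_h^2={\rm e}^{-f}\,\mathrm{tr}_h(i\partial\bar\partial\,{\rm e}^f)$, and $\mathrm{tr}_h\alpha=\mathrm{tr}_h\alpha^-$ for hyperhermitian $h$. So the lemma for all $h$ is equivalent to the $h$-independent condition $(i\partial\bar\partial\,{\rm e}^f)^-\le 0$. The sign pattern you predict, zero on $\mathbb H\partial_r$ and negative on its complement, is the right one.

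The first repair: Verbitsky's theorem only identifies conformal classes, so ``$\tilde g$ is a constant multiple of $r^{-2}g_C$'' must be justified. Argue as follows: $r^{-2}g_C=dt^2+g_X$ descends to $M$, its Lee form is $-2\,dt$, which is parallel, so it is Gauduchon. Uniqueness of Gauduchon metrics up to homothety on compact $M$ then gives $\tilde g=c\,r^{-2}g_C$. This is exactly where the Gauduchon normalisation enters, and it cannot be skipped: for a non-Gauduchon representative of the conformal class the inequality is false in general.

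The second repair: the decisive sign computation is left open, and your constant does not match. With the paper's convention $i\partial\bar\partial r^2=2\omega_C$, the form $i\partial r^2\wedge\bar\partial r^2$ corresponds to $2r^2(dr^2+(Idr)^2)$, not $4r^2(dr^2+(Idr)^2)$. Mixing the two leaves a positive eigenvalue $2r^{-4}$ on $\mathbb H\partial_r$. With consistent constants one gets
\[
(i\partial\bar\partial\, r^{-2})^-\ \longleftrightarrow\ 2r^{-4}\bigl(dr^2+(Idr)^2+(Jdr)^2+(Kdr)^2-g_C\bigr)\le 0\,.
\]
Your $n=1$ check (harmonicity of $|q|^{-2}$) is indeed enough to calibrate the relative factor.

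The paper never leaves $U$. From $\nabla^{\tilde g}df=0$ and the conformal change formula for the Levi-Civita connection it derives $\mathrm{Hess}_{\hat g}f=df\otimes df-\frac12|df|^2_{\hat g}\hat g$. It then traces this in a $\hat g$-orthonormal quaternionic frame diagonalising $h$, obtaining $\Delta_hf+|\partial f|^2_h=-\sum_i\lambda_i^{-1}\sum_{j\neq i}r_j$. This is precisely your ``intrinsic alternative'', and your guess for its outcome is correct. With $|\tilde\theta|_{\tilde g}=1$, the form $({\rm e}^{-f}i\partial\bar\partial\,{\rm e}^f)^-$ corresponds to $-\frac12\tilde g$ on the $\tilde g$-orthogonal complement of the quaternionic line of $\tilde\theta^\sharp$, and vanishes on that line.

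The paper's argument uses only the parallelism of the Lee form. It therefore needs neither Verbitsky's structure theorem, nor Gauduchon uniqueness, nor compactness, which makes it the more economical proof. Your cone route buys an explicit global model (${\rm e}^f=c'r^{-2}$). Your reformulation as the sign of one $h$-independent form also makes transparent why the inequality is uniform in $h$ and where equality occurs.
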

\begin{proof}
The conformal change formula for the Levi-Civita connection is
\[
\nabla^{\tilde g}_X Y
=
\nabla^{\hat g}_X Y
+\frac12\bigl(
df(X)Y+df(Y)X
-\hat g(X,Y)\mathrm{grad}_{\hat g}f
\bigr).
\]
Since the Lee form $\tilde \theta=df$ is parallel, we have $\nabla^{\tilde g}df=0$. It follows that
\begin{equation}\label{Hessian}
\mathrm{Hess}_{\hat g}f
=
df\otimes df-\frac12|df|_{\hat g}^2\hat g\,.
\end{equation}
Fix a point and choose a $\hat g$-orthonormal frame $e_i, Ie_i,Je_i, Ke_i$, $i=1,\ldots,n$, adapted to $(I,J,K)$ which diagonalizes $h$ relative to $\hat g$.
By hyperhermitianity, the eigenvalue of $h$ is the same on the four vectors of each block; denote it by $\lambda_i>0$. Set
\[
r_i=
df(e_i)^2+df(Ie_i)^2+df(Je_i)^2+df(Ke_i)^2\,.
\]
Tracing \eqref{Hessian} gives
\[
\begin{split}
\mathrm{tr}_{h}\mathrm{Hess}_{\hat g}f&=\sum_{i=1}^n\frac{1}{\lambda_i}\left(
\mathrm{Hess}_{\hat g}f(e_i,e_i)
+\mathrm{Hess}_{\hat g}f(Ie_i,Ie_i)+
\mathrm{Hess}_{\hat g}f(Je_i,Je_i)
+\mathrm{Hess}_{\hat g}f(Ke_i,Ke_i)\right)
\\
&=\sum_{i=1}^n\frac{r_i-2|df|_{\hat g}^2}{\lambda_i}\,.
\end{split}
\]
The associated $\hat g$-unitary frame of type $(1,0)$ is
\[
Z_{2i-1}=\frac{e_i-\mathrm{i}Ie_i}{\sqrt{2}},
\qquad
Z_{2i}=\frac{Je_i-\mathrm{i}Ke_i}{\sqrt{2}}.
\]
Extend $\mathrm{Hess}_{\hat g}f$ complex-bilinearly to $TU\otimes \C$. Since $\hat g$ is K\"ahler with respect to $I$, for every $Z,W\in T_I^{1,0}U$ we have $ (\partial\bar\partial f)(Z,\bar W) = \mathrm{Hess}_{\hat g}f(Z,\bar W) $. Hence
\[
\Delta_h f =\frac12\sum_{i=1}^n\frac{r_i-2|df|_{\hat g}^2}{\lambda_i}, \qquad
|\partial f|_h^2 =\frac12\sum_{i=1}^n\frac{r_i}{\lambda_i}.
\]
Therefore
\[
\Delta_h f+|\partial f|_h^2 = \sum_{i=1}^n\frac{r_i-|df|_{\hat g}^2}{\lambda_i} = -\sum_{i=1}^n
\frac{\sum_{j\neq i}r_j}{\lambda_i}
\leq 0\,. \qedhere 
\]
\end{proof}

From Lemma \ref{Gauduchon} we have $-\Delta_\phi f-|\partial f|_{g_\phi}^{2}\geq 0$ and \eqref{A-team} reduces to 
\begin{equation}\label{C-team}
0\geq\frac{\Delta_\phi\mathrm{tr}_{\hat g}g_\phi}
{\mathrm{tr}_{\hat g}g_\phi}+2|\partial f|_{g_\phi}^2-2
\mathrm{Re}\langle\partial f,\partial\log \mathrm{tr}_{\hat g}g_\phi\rangle_{g_\phi}
+\frac{\mathrm{tr}_{g_\phi}g-2n}{A+\phi} \quad \mbox{ at }x_0\,.
\end{equation}
The next step consists in showing that 

\begin{equation}\label{B-team}
\frac{\Delta_\phi\mathrm{tr}_{\hat g}g_\phi}{\mathrm{tr}_{\hat g}g_\phi}
\geq \frac12 \left| \partial\log\mathrm{tr}_{\hat g}g_\phi
\right|_{g_\phi}^{2}- \frac{C(1+\mathrm{tr}_{g_\phi}g)}{\mathrm{tr}_{\hat g}g_\phi}\,,
\end{equation}
where $C$ is a positive constant independent of $\varphi$. We need the following technical proposition which is interesting in its own right.

\begin{prop}\label{Alesker}
Let $\hat g,h$ be hyperhermitian metrics on $(U,I,J,K)$, where $h$ is HKT. Take $I$-holomorphic coordinates $(z^1,\ldots,z^{2n})$ around a point $p\in M$ satisfying the following assumptions: 
\begin{itemize}
\item $J_p$ takes its standard form with respect to $(z^1,\ldots,z^{2n})$;

\vspace{0.1cm}
\item $J_{i,k}^{\bar s}(p)=J_{i,\bar k}^{\bar s}(p)=J_{\bar j,k}^r(p)=J_{\bar j, \bar k}^r(p)=0$;

\vspace{0.1cm}
\item $
\hat g_{i\bar j}(p)=\delta_{ij}$, $
h_{i\bar j}(p)=\lambda_i\delta_{ij}$, $
\lambda_{2a-1}=\lambda_{2a}>0$.
\end{itemize}
Then
\[
\sum_{i,k=1}^{2n}
\frac{|h_{k\bar k,i}|^2}{\lambda_i\lambda_k}
\leq
2\sum_{i,j,r=1}^{2n}
\frac{|h_{i\bar j,r}|^2}{\lambda_i\lambda_j} \quad \mbox{ at }p\,.
\]
Consequently, if $\hat{g}$ is hyperk\"ahler
\[
\frac{|\partial\mathrm{tr}_{\hat g}h|_h^2}{\mathrm{tr}_{\hat g}h}
\leq
2\hat g^{r\bar s}h^{i\bar b}h^{a\bar j}
(\nabla^{\hat g}_{\bar s}h)_{a\bar b}
(\nabla^{\hat g}_r h)_{i\bar j}  \quad \mbox{ at }p\,.
\]
\end{prop}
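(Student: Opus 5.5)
The plan is to mimic the Kähler argument, where $h_{i\bar j,k}=h_{k\bar j,i}$ lets one trade diagonal derivatives for off-diagonal ones. In the HKT case the analogous symmetry is supplied by $\partial\Omega_h=0$, twisted by $J$.

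\textbf{Step 1: express $\Omega_h$ through $h$.} Writing $\Omega_h=\frac12\Omega_{ij}\,dz^i\wedge dz^j$, we have
\[
\Omega_{ij}=h_{i\bar l}J^{\bar l}_{j}
\]
up to a sign convention. Differentiating at $p$ and using the vanishing of the first derivatives of $J$ at $p$ gives
\[
\Omega_{ij,k}(p)=h_{i\bar l,k}J^{\bar l}_j(p).
\]
Since $J_p$ is standard, $J^{\bar l}_j(p)$ is a signed permutation pairing $2a-1\leftrightarrow 2a$. Denote by $j^*$ the partner of $j$ and by $\epsilon_j=\pm1$ the corresponding sign. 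Then
\[
\Omega_{ij,k}=\epsilon_j\,h_{i\overline{j^*},k} \quad \mbox{at } p.
\]

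\textbf{Step 2: use the HKT condition.} The condition $\partial\Omega_h=0$ reads
\[
\Omega_{ij,k}+\Omega_{jk,i}+\Omega_{ki,j}=0.
\]
Take $j=k^*$. Then $\Omega_{k k^*,i}=\pm h_{k\bar k,i}$, and the identity yields
\[
h_{k\bar k,i}=\pm h_{k^*\overline{i^*},k}\pm h_{i\overline{k^*},k^*}\quad\mbox{at }p .
\]
Applying $|a+b|^2\le 2|a|^2+2|b|^2$ gives
\[
\frac{|h_{k\bar k,i}|^2}{\lambda_i\lambda_k}\le 2\,\frac{|h_{k^*\overline{i^*},k}|^2}{\lambda_i\lambda_k}+2\,\frac{|h_{i\overline{k^*},k^*}|^2}{\lambda_i\lambda_k}.
\]
Since $\lambda_{k^*}=\lambda_k$ and $\lambda_{i^*}=\lambda_i$, the weights match exactly the weights $\frac{1}{\lambda_a\lambda_b}$ in the right-hand side of the statement for the index pairs $(a,b)=(k^*,i^*)$ and $(i,k^*)$.

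\textbf{Step 3: the counting, which I expect to be the main obstacle.} Summing over $i,k$, I must show that each term $|h_{a\bar b,r}|^2/(\lambda_a\lambda_b)$ on the right-hand side is hit with total coefficient at most $2$, so that the factor $2$ is not lost. Naively, two families each contribute with coefficient $2$, which would give a constant $4$.

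To fix this, I will first use the reality/Hermitian symmetry $\overline{h_{a\bar b,r}}$ relations together with the $J$-invariance of $h$. The latter gives
\[
h_{a\bar b}=\pm\,\overline{h_{a^*\overline{b^*}}}
\]
in the standard form, and since the derivatives of $J$ vanish at $p$, the same relation holds for the first derivatives at $p$. This should identify the two families of terms, or show that they are disjoint subsets of the triples $(a,b,r)$ whose combined multiplicity is $1$.

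If this identification fails, the fallback is to split the sum over $k$ into pairs $\{k,k^*\}$. One then applies the cyclic identity to the symmetric combination $h_{k\bar k,i}+h_{k^*\overline{k^*},i}$. The $J$-invariance of $h$ relates $h_{k\bar k,i}$ and $h_{k^*\overline{k^*},i}$, which should allow the Cauchy--Schwarz inequality to be balanced so that the constant stays $2$.

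\textbf{Step 4: the consequence.} For hyperkähler $\hat g$, I choose the coordinates to also be $\hat g$-normal, so that $d\hat g(p)=0$. Since $J$ is $\nabla^{\hat g}$-parallel, the hypotheses on the derivatives of $J$ are compatible with this choice, and at $p$ ordinary derivatives of $h$ coincide with $\nabla^{\hat g}$-derivatives. Then
\[
\partial_r\mathrm{tr}_{\hat g}h=\sum_k h_{k\bar k,r}.
\]
By Cauchy--Schwarz,
\[
\Big|\sum_k h_{k\bar k,r}\Big|^2\le\Big(\sum_k\lambda_k\Big)\sum_k\frac{|h_{k\bar k,r}|^2}{\lambda_k}.
\]
Dividing by $\lambda_r$, summing over $r$, and using $\sum_k\lambda_k=\mathrm{tr}_{\hat g}h$ gives
\[
\frac{|\partial\mathrm{tr}_{\hat g}h|^2_h}{\mathrm{tr}_{\hat g}h}\le\sum_{i,k}\frac{|h_{k\bar k,i}|^2}{\lambda_i\lambda_k}.
\]
The first inequality of the statement now bounds this by $2\sum_{i,j,r}|h_{i\bar j,r}|^2/(\lambda_i\lambda_j)$, which is the tensorial expression in the statement evaluated at $p$.
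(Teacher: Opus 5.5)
\textbf{Verdict: the strategy is right, but Step 3 is not proved, and the worry there is a misdiagnosis.}

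Your Steps 1, 2 and 4 are sound and follow the paper. The identity you extract from $\partial\Omega_h=0$,
$h_{k\bar k,i}=\pm h_{k^*\overline{i^*},k}\pm h_{i\overline{k^*},k^*}$, becomes the paper's identity
$h_{k\bar k,i}=h_{i\bar q,\ell}J^{\bar q}_kJ^{\ell}_{\bar k}+h_{i\bar k,k}$ once you apply the hyperhermitian symmetry at $p$. The useful form of that symmetry is $h_{a\bar b,c}=\pm h_{b^*\overline{a^*},c}$, with the derivative index left unconjugated. Step 4 is the paper's weighted Cauchy--Schwarz.

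Step 3, however, is the only place where the constant $2$ is decided, and you leave it open. You offer two hoped-for mechanisms and a fallback without checking any of them. The worry that naive counting gives $4$ is also mistaken: it appears only if you first use $J$-invariance to merge the two families.

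\textbf{The missing observation.} Keep the identity in its raw form.
\begin{itemize}
\item The first family consists of the triples $(a,b,r)=(k^*,i^*,k)$, which are exactly those with $a=r^*$.
\item The second consists of the triples $(i,k^*,k^*)$, which are exactly those with $b=r$.
\item Both maps $(i,k)\mapsto(a,b,r)$ are injective, and, as you noted, the weights match $1/(\lambda_a\lambda_b)$.
\item The two sets meet only in triples $(r^*,r,r)$, and there the term vanishes. At $p$, hyperhermitianity with $dJ(p)=0$ gives $h_{r^*\bar r,c}=J^{\bar r}_{r^*}J^{r^*}_{\bar r}\,h_{r^*\bar r,c}=-h_{r^*\bar r,c}$.
\end{itemize}
Hence
\[
\sum_{i,k}\frac{|h_{k\bar k,i}|^2}{\lambda_i\lambda_k}\le 2\sum_{a=r^*}\frac{|h_{a\bar b,r}|^2}{\lambda_a\lambda_b}+2\sum_{b=r}\frac{|h_{a\bar b,r}|^2}{\lambda_a\lambda_b}\le 2\sum_{a,b,r}\frac{|h_{a\bar b,r}|^2}{\lambda_a\lambda_b},
\]
and no fallback is needed.

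If you do merge the families via $J$-invariance, you get $4\sum_{i,m}|h_{i\bar m,m}|^2/(\lambda_i\lambda_m)$. You must then notice that the right-hand side contains each $|h_{i\bar m,m}|^2$ twice, as $(i,m,m)$ and as $(m^*,i^*,m)$. These triples are distinct unless $i=m^*$, where the term is zero, so this is the same fact in another guise.

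\textbf{Comparison with the paper.} The paper does the same bookkeeping block by block. For $a\neq b$ it symmetrizes in $(a,b)$, which is where the disjointness enters. For $a=b$ it avoids the HKT identity and uses the trivial bound $\sum_{i\in B_a}|h_{s_a\bar s_a,i}|^2\le\frac12\sum_r\sum_{i,j\in B_a}|h_{i\bar j,r}|^2$. Once completed as above, your version gives the constant $2$ without the case split.
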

\begin{proof}
Since the Obata connections is torsion-free, we can take $I$-holomorphic geodesic coordinates for it around $p$, then we may assume that these coordinates satisfy the assumptions of the proposition, see e.g. \cite[Remark 2.19]{DS23}. Hyperhermitianity of $h$ implies, at $p$,
\[
h_{i\bar j}
=J_i^{\bar s}J_{\bar j}^r h_{r \bar s},
\qquad
h_{i\bar j,k}
=J_i^{\bar s}J_{\bar j}^r h_{r \bar s,k}\,.
\]
Let $\Omega$ be the $(2,0)$-form associated with $h$. Since $h$ is HKT, we obtain at $p$
\[
0=(\partial \Omega)_{ij\ell}=\Omega_{ij,\ell}+\Omega_{j\ell,i}+\Omega_{\ell i,j}=(h_{i\bar q}J^{\bar q}_j)_{,\ell}+(h_{j\bar q}J^{\bar q}_\ell)_{,i}+(h_{\ell\bar q}J^{\bar q}_i)_{,j}=h_{i\bar q,\ell}J^{\bar q}_j+h_{j\bar q,i}J^{\bar q}_\ell+h_{\ell\bar q,j}J^{\bar q}_i\,.
\]
Now, multiplying by $J^\ell_{\bar k}$ and choosing $j=k$ we obtain, in particular
\[
h_{k\bar k,i}
=
h_{i\bar q,\ell}J^{\bar q}_kJ^{\ell}_{\bar k}+h_{i\bar k,k}\qquad \mbox{at }p\,.
\]

Write
\[
B_b=\{2b-1,2b\},
\qquad
\mu_b=\lambda_{2b-1}=\lambda_{2b},
\qquad b=1,\ldots,n\,,
\]
and set $k=s_b:=2b-1$. We then get
\[
|h_{s_b\bar s_b,i}|^2
\leq
2\left(
|h_{i\bar q,\ell}J^{\bar q}_{s_b}J^{\ell}_{\overline{s_b}}|^2+|h_{i\bar s_b,s_b}|^2\right)=2\sum_{j\in B_b}
|h_{i\bar j,j}|^2\qquad \mbox{at }p\,.
\]
Now, observe that, for $a\neq b$ after relabelling the indices
\[
\sum_{\substack{i\in B_a\\j\in B_b}}
|h_{j\bar i,i}|^2=
\sum_{\substack{i\in B_a\\j\in B_b}}
|h_{p\bar q,i}J^p_{\bar i}J^{\bar q}_j|^2=\sum_{\substack{i\in B_a\\j\in B_b}}
|h_{i\bar j, r}J^{ r}_{\bar i}|^2\qquad \mbox{at }p\,.
\]
We then infer
\[
\sum_{\substack{i\in B_a}}|h_{s_b\bar s_b,i}|^2+\sum_{\substack{j\in B_b}}|h_{s_a\bar s_a,j}|^2\leq 2\sum_{\substack{i\in B_a\\j\in B_b}}
(|h_{i\bar j,j}|^2+
|h_{j\bar i,i}|^2)
\leq
2\sum_{r=1}^{2n}\sum_{\substack{i\in B_a\\j\in B_b}}
|h_{i\bar j,r}|^2\qquad \mbox{at }p\,.
\]
Since the left-hand side is symmetric in $a$ and $b$ we deduce
\begin{equation}\label{eq:X}
\sum_{\substack{i\in B_a}}|h_{s_b\bar s_b,i}|^2+\sum_{\substack{j\in B_b}}|h_{s_a\bar s_a,j}|^2\leq \sum_{r=1}^{2n}\left( \sum_{\substack{i\in B_a\\j\in B_b}}
|h_{i\bar j,r}|^2+\sum_{\substack{i\in B_b\\j\in B_a}}
|h_{i\bar j,r}|^2\right)\qquad \mbox{at }p\,.
\end{equation}
On the other hand, for $a=b$ we have
\begin{equation}\label{eq:Y}
\sum_{i\in B_a}|h_{s_a\bar s_a,i}|^2\leq \sum_{r=1}^{2n}|h_{s_a\bar s_a,r}|^2=\frac{1}{2}\sum_{r=1}^{2n} \sum_{i,j\in B_a}|h_{i\bar j,r}|^2\qquad \mbox{at }p\,.
\end{equation}
We then deduce, using \eqref{eq:X} and \eqref{eq:Y}, at $p$
\[
\begin{aligned}
\sum_{i,k=1}^{2n}
\frac{|h_{k\bar k,i}|^2}{\lambda_i\lambda_k}
&=2\sum_{a=1}^n\sum_{i=1}^{2n}
\frac{|h_{s_a\bar s_a,i}|^2}{\lambda_i\mu_a}\\
&=
2\sum_{a=1}^n\frac{1}{\mu_a^2}\sum_{i\in B_a}|h_{s_a\bar s_a,i}|^2
+
2\sum_{a<b}\frac{1}{\mu_a\mu_b}\left(\sum_{\substack{i\in B_a}}|h_{s_b\bar s_b,i}|^2+\sum_{\substack{j\in B_b}}|h_{s_a\bar s_a,j}|^2\right)\\
&\leq
\sum_{a=1}^n\frac{1}{\mu_a^2}\sum_{r=1}^{2n} \sum_{i,j\in B_a}|h_{i\bar j,r}|^2
+
2\sum_{a<b}\frac{1}{\mu_a\mu_b}\sum_{r=1}^{2n}\left( \sum_{\substack{i\in B_a\\j\in B_b}}
|h_{i\bar j,r}|^2+\sum_{\substack{i\in B_b\\j\in B_a}}
|h_{i\bar j,r}|^2\right)\\
&\leq
2\sum_{a,b=1}^n\frac{1}{\mu_a\mu_b}\sum_{r=1}^{2n}\sum_{\substack{i\in B_a\\j\in B_b}}
|h_{i\bar j,r}|^2=
2\sum_{i,j,r=1}^{2n}
\frac{|h_{i\bar j,r}|^2}{\lambda_i\lambda_j}\,.
\end{aligned}
\]
This proves the first assertion. 

Finally, when $\hat g$ is hyperk\"ahler its first derivatives vanish at $p$, and weighted Cauchy--Schwarz yields
\[
\begin{aligned}
|\partial\mathrm{tr}_{\hat g}h|_h^2=
\sum_{i=1}^{2n}\frac1{\lambda_i}
\left|\sum_{k=1}^{2n} h_{k\bar k,i}\right|^2=
\sum_{i=1}^{2n}\frac1{\lambda_i}
\left|\sum_{k=1}^{2n} \sqrt{\lambda_k} \frac{h_{k\bar k,i}}{\sqrt \lambda_k}\right|^2\leq
\left(\sum_{k=1}^{2n}\lambda_k\right)
\sum_{i,k=1}^{2n}
\frac{|h_{k\bar k,i}|^2}{\lambda_i\lambda_k}\,.
\end{aligned}
\]
Now the second assertion follows from the first and the fact that at $p$, partial and covariant first derivatives with respect
to $\hat g$ agree, so that
\[
\sum_{i,j,r}
\frac{|h_{i\bar j,r}|^2}{\lambda_i\lambda_j}
=
\hat g^{r\bar s}h^{i\bar b}h^{a\bar j}
(\nabla^{\hat g}_{\bar s}h)_{a\bar b}
(\nabla^{\hat g}_r h)_{i\bar j}\,. \qedhere
\]
\end{proof}

Now we can prove \eqref{B-team}. 


Take normal coordinates for the hyperk\"ahler metric $\hat g$ on $U$ at $x_0$. So
$$
\hat g_{r\bar s,k}=J_{r,k}^{\bar s}=J_{r,\bar k}^{\bar s}=0   \quad \mbox{ at }x_0\,,
$$
see \cite[Remark 2.13]{DS23} or \cite[Proposition 3.1]{GV26b}. Then we directly compute at $x_0$   
$$
\begin{aligned}
\Delta_\phi \mathrm{tr}_{\hat g}g_\phi&= g_\phi^{i\bar j} \left((\hat g^{r\bar s})_{,i\bar j}g_{\phi \,r\bar s}+\hat g^{r\bar s} g_{\phi\, r\bar s, i \bar j}\right)\\
&=-g_\phi^{i\bar j}\hat g^{a\bar s}\hat g^{r\bar b}\hat g_{a\bar b,i\bar j}g_{\varphi\,r\bar s}+g_\phi^{i\bar j}\hat g^{r\bar s}\left( g_{r\bar s,i\bar j}+\frac12\phi_{r\bar s i\bar j}+\frac12J^{\bar a}_rJ^b_{\bar s}\phi_{b\bar a i\bar j} +\frac12J_{r,i\bar j}^{\bar a}J_{\bar s}^b\phi_{b\bar a}+\frac12J_{r}^{\bar a}J_{\bar s,i\bar j}^b\phi_{b\bar a}\right)\\
&=g_\phi^{i\bar j}\hat g^{a\bar s}\hat g^{r\bar b}\hat R_{a\bar bi\bar j}g_{\varphi\,r\bar s}
+g_\phi^{i\bar j}\hat g^{r\bar s}\left(g_{r\bar s,i\bar j}+\phi_{r\bar s i\bar j}\right) +\frac{1}{2}g_\phi^{i\bar j}\hat g^{r\bar s}\left(J_{r,i\bar j}^{\bar a}J_{\bar s}^b+J_{r}^{\bar a}J_{\bar s,i\bar j}^b\right) \phi_{b\bar a}\,,
\end{aligned}
$$
where $\hat R_{a\bar b i \bar j}$ are the components of the curvature of $\hat g$, and we used $\hat g^{r\bar s}J_r^{\bar a}J_{\bar s}^b=\hat g^{b\bar a}$ taking into account the $J$-Hermitianity of $\hat g$. Hence Lemmas 3.2 and 3.4 in \cite{GV26b} imply 
\begin{equation}\label{Delta}
\begin{aligned}
\Delta_\phi \mathrm{tr}_{\hat g}g_\phi=
g_\phi^{i\bar j}\hat g^{r\bar s}\left(g_{r\bar s,i\bar j}+\phi_{r\bar s i\bar j}\right)\,.
\end{aligned}
\end{equation} 
On the other hand, by differentiating twice the relation 
\[
\log\det(g_{\phi\,i\bar j})
=
2F+2b+\log\det(g_{i\bar j})\,, 
\]
we get
\[
\begin{split}
2\partial_{r}\partial_{\bar s} F+\partial_{r}\partial_{\bar s} \log\det(g_{i\bar j})&=
-g_{\varphi}^{i\bar b}g_{\varphi}^{a\bar j}g_{\varphi\,a\bar b,\bar s}g_{\varphi\,i\bar j,r}
\\
&\quad +g_{\varphi}^{i\bar j}\left(g_{i\bar j,r\bar s}
 + \frac12\varphi_{r\bar s i\bar j }+\frac12 J_{i}^{\bar b}J_{\bar j}^a\varphi_{r\bar s a\bar b}
+ \frac12J_{i}^{\bar b}J_{\bar j,r\bar s}^a\varphi_{a\bar b}
+ \frac12J_{i,r\bar s}^{\bar b}J_{\bar j}^a\varphi_{a\bar b}\right)\,.
\end{split}
\]
By tracing with respect to $\hat g$ and applying \cite[Lemma 3.4]{GV26b} the terms involving the second derivatives of $J$ vanish and we infer 
\[
2\Delta_{\hat g} F+\Delta_{\hat g}\log \det(g_{i\bar j})=-\hat g^{r   \bar s }g_{\varphi}^{i\bar b}g_{\varphi}^{a\bar j}g_{\varphi\,a\bar b,\bar s}g_{\varphi\,i\bar j,r}
+\hat g^{r   \bar s }g_{\varphi}^{i\bar j}(g_{i\bar j,r\bar s}
+ \varphi_{r\bar s i\bar j })\,.
\]
Consequently, from \eqref{Delta}, we obtain
\[
\Delta_\phi\mathrm{tr}_{\hat g}g_\phi
=\hat g^{r\bar s}g_\phi^{i\bar b}g_\phi^{a\bar j}
g_{\phi\,a\bar b,\bar s}g_{\phi\,i\bar j,r}+
g_\phi^{i\bar j}\hat g^{r\bar s}
(g_{r\bar s,i\bar j}-g_{i\bar j,r\bar s})
+2\Delta_{\hat g}F
+\Delta_{\hat g}\log\det(g_{i\bar j})\,.
\]
Thus
\[
\Delta_\phi\mathrm{tr}_{\hat g}g_\phi
\geq \hat g^{r\bar s}g_\phi^{i\bar b}g_\phi^{a\bar j}
g_{\phi\,a\bar b,\bar s}g_{\phi\,i\bar j,r}-C_1(1+\mathrm{tr}_{g_\phi}g)\,.
\]
for a positive constant $C_1$ independent of $\varphi$. Combining this with the estimate of Proposition \ref{Alesker} and dividing by
$\mathrm{tr}_{\hat g}g_\phi>0$, we obtain \eqref{B-team}.

\medskip 
Using \eqref{B-team} in \eqref{C-team} we deduce
\[
0\geq\frac12 \left| \partial\log\mathrm{tr}_{\hat g}g_\phi
\right|_{g_\phi}^{2}- \frac{C_1(1+\mathrm{tr}_{g_\phi}g)}{\mathrm{tr}_{\hat g}g_\phi}+2|\partial f|_{g_\phi}^2-2
\mathrm{Re}\langle\partial f,\partial\log \mathrm{tr}_{\hat g}g_\phi\rangle_{g_\phi}
+\frac{\mathrm{tr}_{g_\phi}g-2n}{A+\phi} \quad \mbox{ at }x_0\,.
\]
Now we recognise and drop the square
\[
\frac12 \left| \partial\log\mathrm{tr}_{\hat g}g_\phi
\right|_{g_\phi}^{2}+2|\partial f|_{g_\phi}^2-2
\mathrm{Re}\langle\partial f,\partial\log \mathrm{tr}_{\hat g}g_\phi\rangle_{g_\phi}=\frac{1}{2}\left| \partial\log\mathrm{tr}_{\hat g}g_\phi
-2\partial f\right|_{g_\phi}^{2} \geq 0\,.
\]
Therefore, we have
\[
0\geq- \frac{C_1(1+\mathrm{tr}_{g_\phi}g)}{\mathrm{tr}_{\hat g}g_\phi}+\frac{\mathrm{tr}_{g_\phi}g-2n}{A+\phi} \quad \mbox{ at }x_0\,.
\]
We consider two cases. If $\mathrm{tr}_{\hat g}g_\phi < 2C_1(A+\phi)$ at $x_0$ we obtain a uniform upper bound for $\mathrm{tr}_{\tilde g}g_\phi$ and we are done. Therefore, we may assume $\mathrm{tr}_{\hat g}g_\phi \geq  2C_1(A+\phi)$ and thus
\[
0\geq \left(\frac{1}{A+\phi}-\frac{C_1}{\mathrm{tr}_{\hat g}g_\phi}\right)\mathrm{tr}_{g_\phi}g-\frac{C_1}{\mathrm{tr}_{\hat g}g_\phi}-\frac{2n}{A+\phi}\geq \frac{1}{2(A+\phi)}\mathrm{tr}_{g_\phi}g -C_2\quad \mbox{ at }x_0\,,
\]
where $C_2$ is a positive constant depending on $C_1$ and $\|\phi\|_{C^0}$. Now we deduce
\[
\mathrm{tr}_{g_\phi}g\leq C_3\quad \mbox{ at }x_0\,, 
\]
where $C_3>0$ depends on $C_2$ and $\|\phi\|_{C^0}$. Also
$$
{\rm tr}_{ g} g_\phi \leq \frac{1}{(2n-1)!} ({\rm tr}_{g_\phi } g)^{2n-1}
\frac{\omega^{2n}_\phi }{ \omega^{2n}}\leq \frac{1}{(2n-1)!} C^{2n-1}_3 \mathrm{e}^{2F+2b}
$$
and the fact that there exists a constant $C_4>0$ such that $C_4^{-1}g\leq \tilde g\leq C_4 g$, imply
$$
{\rm tr}_{\tilde g} g_\varphi\leq C_5 \quad \mbox{ at }x_0\,,
$$
where $C_5>0$ depends on $C_3$, $C_4$, $F$ and $b$ (which is uniformly bounded by $F$). It follows
$$
\log \mathrm{tr}_{\tilde g}g_\phi=Q+\log (A+\phi)\leq Q(x_0)+\log (A+\phi)=\log \mathrm{tr}_{\tilde g}g_\phi(x_0) -\log (A+\phi(x_0))+\log (A+\phi)\leq C_6\,,
$$
where $C_6>0$ depends on $C_5$ and $\|\phi\|_{C^0}$.  

We thus have proved the desired upper bound on $\mathrm{tr}_{\tilde g}g_\phi$, concluding the proof of Theorem \ref{main1}.

\begin{rmk}
{\em Proposition \ref{Alesker} is the natural generalization of Alesker's result \cite[Proposition 3.1]{Ale13} to our setting. }
\end{rmk}

\begin{rmk}\label{rem}
{\em Our result does not naturally extend to the \lq\lq general\rq\rq  \,case without the assumption of the existence of a background LCHK metric. For instance the crucial inequality \eqref{B-team} does not hold in general if we replace the hyperk\"ahler metric with an HKT metric. A counterexample is the following: 

Work on a neighbourhood of the origin in
$\mathbb H^2\simeq\mathbb C^4$ and set 
\[
a(z)=|z_1|^2+|z_2|^2\,,
\qquad
b(z)=|z_3|^2+|z_4|^2\,.
\]
Consider the background metric
\[
\omega=\omega_0+\left(i \partial \bar \partial(a(z)b(z))\right)^-\,,
\]
where $\omega_0$ is the standard metric on $\C^4$. In our conventions
$$
Jd\bar z_1=dz_2,\qquad Jd\bar z_2=-dz_1,\qquad Jd\bar z_3=dz_4,\qquad Jd\bar z_4=-dz_3\,,
$$
and so, the matrix associated to $\omega$ is 
\[
\mathrm{Id}+ \begin{pmatrix} b(z)&0& \dfrac{\bar z_1z_3+\bar z_4z_2}{2}& \dfrac{\bar z_1z_4-\bar z_3z_2}{2}\\[3mm] 0&b(z)& \dfrac{\bar z_2z_3-\bar z_4z_1}{2}& \dfrac{\bar z_2z_4+\bar z_3z_1}{2}\\[3mm] \dfrac{\bar z_3z_1+\bar z_2z_4}{2}& \dfrac{\bar z_3z_2-\bar z_1z_4}{2}& a(z)&0\\[3mm] \dfrac{\bar z_4z_1-\bar z_2z_3}{2}& \dfrac{\bar z_4z_2+\bar z_1z_3}{2}& 0&a(z)\end{pmatrix}
\]
which is positive definite on a sufficiently small neighbourhood of the origin. Clearly, at the origin $g_{i\bar j}=\delta_{i\bar j} $ and $ g_{i\bar j,k}=0 $.

For $t>1$, define
\[
\phi_t=(t-1)a(z)+(t^{-1}-1)b(z)-a(z)b(z).
\]
Then $ \omega_{\phi_t} := \omega+(i\partial \bar \partial \phi_t)^- $ has the matrix expression $
\mathrm{diag}(t,t,t^{-1},t^{-1})$, which is positive definite. In particular, since $\det(g_{\phi_t})=1$ is constant in $t$, writing $F=-\frac12\log\det(g_{i\bar j})$, the associated $(2,0)$ form $\Omega_{\phi_t}$ solves
\[
\Omega_{\phi_t}^{2}=\mathrm{e}^F\Omega^2\,, \qquad \text{for every }t>1\,,
\]
where $\Omega$ is the $(2,0)$ form associated to $g$. At the origin,
\[
\mathrm{tr}_g g_{\phi_t}
=\mathrm{tr}_{g_{\phi_t}}g
=2(t+t^{-1})\,, \qquad \partial\mathrm{tr}_g g_{\phi_t}=0\,, \qquad \Delta_{\phi_t}\mathrm{tr}_g g_{\phi_t} = -4(t^2+t^{-2})\,.
\]
Consequently, at the origin
\[
\frac{\Delta_{\phi_t}\mathrm{tr}_g g_{\phi_t}}{\mathrm{tr}_g g_{\phi_t}} =-2\frac{t^2+t^{-2}}{t+t^{-1}} \longrightarrow-\infty \qquad\text{as }t\to\infty\,,
\]
whereas
\[
\frac{1+\mathrm{tr}_{g_{\phi_t}}g}{\mathrm{tr}_g g_{\phi_t}}
= 1+\frac{1}{2(t+t^{-1})}
\]
remains uniformly bounded, thus violating the inequality \eqref{B-team}.
}
\end{rmk}

\section{Proof of Theorem \ref{main2}}
Let $T_{\max}$ denote the maximal existence time of the flow \eqref{flow}. By definition $T_{\max}\leq T$, where $T$ is the one appearing in Conjecture \ref{conj}. Assume by contradiction that $T_{\max}<T$. Thanks to \cite[Proposition 5.2]{BGV23} or \cite[Theorem 1.4]{BS26} it is enough to provide an upper bound for $\mathrm{tr}_{g_0}g(t)$ on $[0,T_{\max})$.

Choose $T_{\max}<\tau<T$, then by definition there exists a smooth function $\psi$ such that
\[
\chi_\tau
=\omega_0-\tau\mathrm{Ric}(\omega_0)^{-}
 +(i\partial\bar\partial\psi)^->0\,.
\]
The family
\[
\chi_t
=\left(1-\frac t\tau\right)\omega_0
 +\frac t\tau\chi_\tau
\]
is positive and uniformly equivalent to $\omega_0$ on $[0,T_{\max}]$.
Write
\[
\omega_t=\chi_t+(i\partial\bar\partial \phi(t))^-\,,
\]
where the potential $\phi$ satisfies
\[
\dot \phi
=\log\frac{\omega_t^{2n}}{\omega_0^{2n}}
 -\frac{\psi}{\tau}\,, \qquad \phi(0)=0\,.
\]
Then $\omega_t $ is the solution of the flow \eqref{flow}. By Lemmas 5.3 and 5.4 in \cite{BGV23} we deduce that $\phi$ and $\dot \phi$ are uniformly bounded from above and from below.

Let $A=1+\|\phi\|_{C^0(M\times[0,T_{\max}))}$, and consider the test function
\[
Q=\log\mathrm{tr}_{\tilde g}g(t)-\log(A+\phi).
\]
Fix $t_1<T_{\max}$ and let $(x_0,t_0)$ be a maximum point of
$Q$ on $M\times[0,t_1]$.
Now, either $t_0=0$ in which case we have
\[
\log\mathrm{tr}_{\tilde g}g(t)-\log(A+\phi)\leq \log\mathrm{tr}_{\tilde g}g_0(x_0)-\log(A)
\]
and the estimate follows, or
\[
\frac{\partial \mathrm{tr}_{\tilde g}g(t)}{\mathrm{tr}_{\tilde g}g(t)}=\frac{\partial \phi}{A+\phi}\quad \mbox{ at }(x_0,t_0)\,,
\]
and so
\begin{equation}\label{S-team}
0\geq (\Delta_{g(t)}-\partial_t)Q=\frac{\Delta_{g(t)}\mathrm{tr}_{\tilde g}g(t)}
{\mathrm{tr}_{\tilde g}g(t)}
-\frac{\Delta_{g(t)} \phi}{A+\phi}-\frac{\partial_t \mathrm{tr}_{\tilde g} g(t)}{\mathrm{tr}_{\tilde g}g(t)}+\frac{\dot \phi}{A+\phi}\quad \mbox{ at }(x_0,t_0)\,,
\end{equation}
Choose the Gauduchon representative $\tilde g$ of the auxiliary LCHK conformal class. Locally write $\tilde g={\rm e}^f\hat g,$ where $\hat g$ is hyperk\"ahler. Then, the inequality \eqref{S-team} rewrites, at $(x_0,t_0)$
\[
0\geq\frac{\Delta_{g(t)}\mathrm{tr}_{\hat g}g(t)}
{\mathrm{tr}_{\hat g}g(t)}-\Delta_{g(t)}f+|\partial f|^2_{g(t)}-\frac{2}{\mathrm{tr}_{\hat g}g(t)}\Re\langle \partial f, \partial \mathrm{tr}_{\hat g}g(t)\rangle_{g(t)}
+\frac{\mathrm{tr}_{g(t)}g_{\chi_t}-2n}{A+\phi}-\frac{\partial_t \mathrm{tr}_{\hat g} g(t)}{\mathrm{tr}_{\hat g}g(t)}+\frac{\dot \phi}{A+\phi}\,,
\]
where we also used $\Delta_{g(t)} \phi=2n-\mathrm{tr}_{g(t)}g_{\chi_t}.$

Using the cancellations of Lemmas 3.2 and 3.4 in \cite{GV26b} we have
\[
\Delta_{g(t)} \mathrm{tr}_{\hat g}g(t)=g(t)^{i\bar j}\hat g^{r\bar s}((g_{\chi_ t})_{r \bar s,i\bar j}+\phi_{r\bar s i\bar j})\,.
\]
On the other hand, 
\[
\partial_t \mathrm{tr}_{\hat g}g(t)=\hat g^{r \bar s}(\log \det g(t)_{i\bar j})_{r\bar s}=\hat g^{r \bar s}\left(g(t)^{i\bar j}((g_{\chi_ t})_{i\bar j,r \bar s}+\phi_{r\bar s i\bar j})-g(t)^{i\bar b}g(t)^{a\bar j}g(t)_{a\bar b,\bar s}g(t)_{i\bar j,r}\right)\,.
\]
We therefore obtain
\[
(\Delta_{g(t)}-\partial_t)\mathrm{tr}_{\hat g}g(t)
\geq \hat g^{r \bar s}g(t)^{i\bar b}g(t)^{a\bar j}g(t)_{a\bar b,\bar s}g(t)_{i\bar j,r}-C_1(1+\mathrm{tr}_{g(t)}g_{\chi_t})
\]
for a positive constant $C_1$. Applying this to \eqref{S-team}, together with Lemma \ref{Gauduchon} and Proposition \ref{Alesker}, gives
\begin{equation}\label{T-team}
0\geq
\left(\frac1{A+\phi}-\frac{C_1}{\mathrm{tr}_{\hat g}g(t)}\right)\mathrm{tr}_{g(t)}g_{\chi_t}-C_2-\frac{C_1}{\mathrm{tr}_{\hat g}g(t)}\quad \mbox{ at }(x_0,t_0)\,,
\end{equation}
where we also dropped the square
\[
\frac12 \left| \partial\log\mathrm{tr}_{\hat g}g(t)
\right|_{g(t)}^{2}+2|\partial f|_{g(t)}^2-2
\mathrm{Re}\langle\partial f,\partial\log \mathrm{tr}_{\hat g}g(t)\rangle_{g(t)}=\frac{1}{2}\left| \partial\log\mathrm{tr}_{\hat g}g(t)
-2\partial f\right|_{g(t)}^{2} \,,
\]
and $C_2>1$ is a constant depending on $\dot \phi$. Since $1\leq A+\phi\leq2A$, either $\mathrm{tr}_{\hat g}g(t)$ is already bounded by $2C_1(A+\phi)$, or we deduce $\mathrm{tr}_{g(t)}g_{\chi_t}\leq C_3$ at $(x_0,t_0)$ from \eqref{T-team}.
In the latter case, the uniform equivalence of $g_{\chi_t}$ and $\hat g$ as well as the uniform determinant bound (which is a consequence of the boundedness of $\dot \phi$), also gives a uniform upper bound for $\mathrm{tr}_{\hat g}g(t)$.

The fact that we have been working at a maximum point for $Q$ and the bound on $\phi$ imply
\[
\mathrm{tr}_{\tilde g}g(t)\leq C
\quad\text{on }M\times[0,t_1].
\]
Since $t_1<T_{\max}$ is arbitrary and $\tilde g$ and $g_0$ are uniformly equivalent, we get
\[
C^{-1}g_0\leq g(t)\leq Cg_0
\quad\text{on }M\times[0,T_{\max}).
\]

Since $\chi_t$ is a smooth family, uniformly positive, with uniformly bounded derivatives, we can apply \cite[Theorem 3.3]{BGV23}. The parabolic $C^{2,\alpha}$ estimate for the potential, followed by Schauder estimates, now yields uniform higher-order bounds away from the initial time. Consequently the solution extends smoothly beyond $T_{\max}$, contradicting maximality. Thus $T_{\max}=T$.

\section{Proof of Theorem \ref{main3}}
In this section we focus on the proof of Theorem \ref{main3}.

\medskip 
Let $H_\lambda$ be a quaternionic Hopf surface. For $f\colon H_\lambda\to \R$, we have 
$$
\mathrm{Ric}_I({\rm e}^f\, g_H)^+=\big(1-\tfrac{|q|^2}{4}\Delta_{\rm Eu}f\big) g_H\,,
$$
where $\Delta_{\rm Eu}f$ is the Euclidean Laplacian applied to the lift of $f$ on $\mathbb{H}\backslash \{0\}$. The Lee vector field $V=-\frac{1}{2}\theta^\sharp$ is the projection of $\frac12 \sum_{k=0}^3x^k\partial_{x^k}$ onto $H_\lambda$. Hence if we assume $f([q])=u(\log |q|)$ for a smooth function $u$ which is  $(\log\lambda)$-periodic, we have  
$$
{\rm Ric}_{I}({\rm e}^{f}\,g_H)^{+}=\left(1-\frac14u''-\frac12 u'\right) g_H\quad \mbox{ and }\quad \mathcal{L}_{V}({\rm e}^{f}\,g_H)=\frac12 u'{\rm e}^{f}\, g_H\,,
$$
where the derivatives of $u$ are computed at $\log |q|$. Hence the soliton equation 
$$
{\rm Ric}_I({\rm e}^{f}\,g_H)^{+}={\rm e}^{f}\,g_H-\mathcal{L}_{V}({\rm e}^{f}\, g_H)
$$ 
reads in terms of $u$ as the ODE
\begin{equation}\label{ODE}
u''+2(1-{\rm e}^u)u'+4({\rm e}^u-1)=0\,.
\end{equation}
It remains to show that \eqref{ODE} has a smooth $\mu$-periodic solution for every $\mu>\pi$. In this way it induces a soliton on $H_\lambda$ for $\lambda={\rm e}^\mu$. 
The function 
\[
H(u,v)={\rm e}^u-u-1-\frac{v}{2}
-\log\left(1-\frac{v}{2}\right)
\]
is a first integral of the ODE. Note that 
\begin{itemize}
\item $H$ is proper and surjective as map $U:=\{{(u,v)\in \R^2\,\,:\,\,v<2} \}\to [0,\infty)$;

\vspace{0.1cm}
\item since $\nabla H=\left({\rm e}^u-1,\frac{v}{2(2-v)} \right)$, the only critical point of $H$ on $U$ is the origin $(0,0)$;

\vspace{0.1cm}
\item $(0,0)$ is the only zero of $H$ in $U$. 

\end{itemize} 
It follows that solutions to equation \eqref{ODE} with initial data $(0,0)\neq (u(0),\dot u(0))\in U$ are periodic. Let $u$ be the solution to \eqref{ODE} such that $u(0)=a>0$, $\dot u(0)=0$, and denote by $\mu(a)$ the period of $u$ (first return time to the transversal half-line $\{v=0,\,u>0\}$). Then $\mu$ is continuous, $\mu(a)\to\pi$ as $a\to0$ (the rescaling $u=aw$ reduces \eqref{ODE} to $\ddot w+4w=0$ at $a=0$), and $\mu(a)\ge a/2$ since $\dot u<2$. Hence the image of $\mu$ contains $(\pi,\infty)$ and the first part of the claim follows.  

\medskip

Now we focus on the second part of the statement. We may assume without loss of generality $c=1$ and we write 
\[
g_t={\rm e}^{f}g_H
\]
for a smooth function $f\colon H_\lambda \times [0,\infty)\to \R$. The assumption $c=1$ translates into
\begin{equation}\label{eq:c=1}
\int_{H_\lambda}{\mathrm e}^{f(\cdot,0)}\,d\mu_{g_H}
=\mathrm{Vol}(g_H)\,.
\end{equation}
We recall that $H_\lambda$ is diffeomorphic to $S^1\times S^3$ via the map induced by 
$$
\R\times S^3\to H_{\lambda}\,,\quad (s,q)\mapsto {\rm e}^sq\,,
$$
where $S^3$ is regarded as $\{q\in \mathbb{H}\,\,:\,\, |q|=1\}$ and the circle $S^1$ as $\R/ \log\lambda \mathbb Z$. 
Under this identification the vector field $V$ is tangent to $S^1$ and $V$-invariant functions on $H_\lambda$ can be regarded as smooth functions on $S^3$. 

Assume $\mathcal{L}_V(g_0)=0$; then 
$\mathcal{L}_V(g_t)=0$ for every $t\in [0,\infty)$ and $f$ descends to a function on $S^3$. The normalized HKT flow can then be rewritten as 
\begin{equation}\label{1}
\partial_tf=\frac14{\rm e}^{-f}\Delta_{S^3}f+1-{\rm e}^{-f}\,.
\end{equation}
Now, \eqref{eq:c=1} yields $ \int_{S^3} \mathrm{e}^{f(\cdot,0)}\, d\mu_{S^3}=\mathrm{Vol}(S^3) $ and it is not difficult to see that
\[
\int_{S^3} \mathrm{e}^{f}\, d\mu_{S^3}=\mathrm{Vol}(S^3) \qquad \text{for every }t\in [0,+\infty)\,.
\]
The uniform equivalence assumption 1. in the statement is equivalent to a uniform $C^0$ bound 
\[
|f|\leq C_0 \,.
\]
Hence this assumption implies that equation \eqref{1} is uniformly parabolic and 
standard parabolic regularity implies for every $k\geq0$ a uniform bound of the form 
\begin{equation}
\sup_{t\geq1}\|f(\cdot,t)\|_{C^k(S^3)}\leq C_k\,.
\label{2}
\end{equation}
Consider now the functional $\mathcal F\colon C^2(S^3)\to \R$ defined as 
\[
\mathcal F(u)
=
\int_{S^3} \left( \frac18 |\nabla u|^2+u-{\rm e}^u \right)\,d\mu_{S^3}\, .
\]
Its $L^2$-gradient is
\[
\nabla\mathcal F(u)
=
-\frac14\Delta_{S^3} u+1-{\rm e}^u\,.
\]
Using \eqref{1}, we obtain
\[
\nabla\mathcal F(f)=-{\rm e}^f \partial_tf
\]
and 
\[
\partial_t\mathcal F(f)
=
\int_{S^3} \nabla \mathcal F(f)\partial_t f\,d\mu_{S^3}\\
=
-\int_{S^3} {\rm e}^f (\partial_tf)^2\,d\mu_{S^3}
\leq0\,.
\]
Hence for $\tau>0$ we have 
$$
\int_{0}^\tau\int_{S^3} {\rm e}^f (\partial_tf)^2\,d\mu_{S^3}\,dt=\mathcal{F}(f(\cdot,0))-\mathcal{F}(f(\cdot,\tau))\,.
$$
Since $f$ is uniformly bounded, $\mathcal F$ is bounded from below along the flow. Therefore we obtain
\[
\int_0^\infty\int_{S^3} {\rm e}^f (\partial_tf)^2\,d\mu_{S^3}\,dt<\infty
\]
and there exists a sequence $t_j\to\infty$ such that $
\|\partial_tf(\cdot,t_j)\|_{L^2}\to 0. $ By the precompactness provided by \eqref{2}, after passing to a subsequence,
\[
f(\cdot,t_j)\longrightarrow \bar f
\quad\text{ in }C^\infty(S^3)\,.
\]
Passing to the limit in \eqref{1}, we obtain
\[
\frac14\Delta_{S^3} \bar f+{\rm e}^{\bar f}-1=0
\]
and the metric 
\[
\bar g={\rm e}^{\bar f}g_H
\]
is HKT--Einstein. 
It remains to prove convergence of the whole trajectory. In a $C^{2,\alpha}$-neighborhood of $\bar f$, the functional $\mathcal F$ is analytic and its Hessian at $\bar f$ as map $C^{2,\alpha}(S^3)\to C^{0,\alpha}(S^3)$ is 
\[
\nabla^2\mathcal F_{\bar f}(u)
=
-\frac14\Delta_{S^3} u-{\rm e}^{\bar f}u\,,
\]
which is an elliptic self-adjoint Fredholm operator.

Hence the \L ojasiewicz--Simon inequality applies: there exist $\vartheta\in(0,\frac12]$, $C>0$, and a $C^{2,\alpha}$-neighbourhood of $\bar f$ such that
\[
|\mathcal F(u)-\mathcal F(\bar f)|^{1-\vartheta}
\leq
C\|\nabla\mathcal F(u)\|_{L^2}\,.
\]
Since $\mathcal F(f(t))$ is non-increasing and converges to $\mathcal F(\bar f)$, the Łojasiewicz--Simon inequality, together with $\nabla\mathcal F(f)=-e^f\partial_tf$ and $|f|\le C_0$ give $-\frac{d}{dt}\big(\mathcal F(f)-\mathcal F(\bar f)\big)^{\vartheta}\ge c\|\partial_tf\|_{L^2}$ as long as $f(t)$ stays $C^{2,\alpha}$-close to $\bar f$. By \eqref{2} and interpolation, a standard continuity argument  shows that this holds for all $t\ge t_j$, $j$ large, so $\int_{t_j}^\infty\|\partial_tf\|_{L^2}\,dt<\infty$ and $f(\cdot,t)\to\bar f$ in $L^2$.

Combining this with the uniform higher order estimates \eqref{2}, we conclude that
\[
f(\cdot,t)\longrightarrow \bar f
\qquad\text{in }C^\infty(S^3)
\]
and the claim follows.

\medskip
\medskip
Finally, assume that $f(\cdot,0)$ is also constant along $IV$. Hence $f(\cdot,t)$ is constant along $IV$ for every $t\in [0,\infty)$ and descends to the base of the Hopf fibration
\[
\begin{tikzcd}
S^1 \arrow[r, hook] & S^3 \arrow[d, "\pi"] \\
                    & S^2
\end{tikzcd}
\]
Since $\pi\colon S^3(1)\to S^2(\tfrac12)$ is a Riemannian submersion with totally geodesic fibers,
$\tfrac14\Delta_{S^3}$ acts on basic functions as the Laplacian $\Delta_{S^2}$ of the unit sphere $S^2$, and
\eqref{1} writes as
\[
e^{f}\partial_tf=\Delta_{S^2} f+e^{f}-1\quad\text{on }S^2 \,.
\]
Since $\mathrm{tr}_{g_H}g_t=2e^{f}$, the assumption $2\,\mathrm{Vol}(g_H)=\int_{H_\lambda}\mathrm{tr}_{g_H}g_0\,d\mu_{g_H}$
reads $\int_{S^2}e^{f(\cdot,0)}d\mu_{S^2}=4\pi$, and it is preserved since
$\frac{d}{dt}\int_{S^2}(e^f-1)\,d\mu_{S^2}=\int_{S^2}(e^f-1)\,d\mu_{S^2}$. Hence the equation can be written as
\[
\partial_t e^{f}=\Delta_{S^2}f-1+4\pi\,\frac{e^{f}}{\int_{S^2}e^{f}\,d\mu_{S^2}}\,,
\]
which is the mean field type flow of \cite{Cas15} with $Q\equiv1$ and $\rho=4\pi<8\pi$. By
\cite[Theorem~0.2(i)]{Cas15}, $f(\cdot,t)$ converges in $H^2(S^2)$ as $t\to\infty$. Since
$H^2(S^2)\hookrightarrow C^0(S^2)$, we get $|f|\le C_0$, i.e.\ condition 1. in Theorem \ref{main3} holds, and the claim follows
from the first part of the proof. \qed

\begin{rmk}
{\em The standard metric $g_H$ is, up to rescaling, the unique
HKT--Einstein metric on $H_\lambda$ invariant under
both $V$ and $IV$. Indeed, write such a metric as
${\rm e}^f g_H$. The invariance implies that $f$ descends
to the round unit sphere $S^2$, and the HKT--Einstein equation
${\rm Ric}_I({\rm e}^f g_H)^+=c\,{\rm e}^f g_H$ becomes
\[
\Delta_{S^2}f+c\,{\rm e}^f=1.
\]
Integration shows that $c>0$, so, after rescaling the metric,
we may assume $c=1$. Then
$\int_{S^2}{\rm e}^f\,dA=4\pi$, and
\cite[Theorem 1.1]{Lin00}, applied with $\alpha=4\pi$,
$\gamma=0$, and $\phi=(f-\bar f)/(4\pi)$, where $\bar f$
denotes the average of $f$, implies that $f$ is constant.
The equation then yields $f\equiv0$.}
\end{rmk}






\medskip

\bibliographystyle{siam}
\bibliography{bibliography}

\end{document}